\documentclass[a4paper, 11pt]{amsart} 
\usepackage[
  margin=1.9cm,
  includefoot,
  footskip=30pt,
]{geometry}
\usepackage{amsmath,amssymb,amscd}
\usepackage{amsfonts}
\usepackage[english]{babel}
\usepackage[leqno]{amsmath}
\usepackage{amssymb,amsthm}
\usepackage{mathrsfs} 
\usepackage{amscd}
\usepackage{enumerate}
\usepackage{epsfig}
\usepackage{relsize}
\usepackage{layout}
\usepackage[usenames,dvipsnames]{xcolor}
\usepackage[backref=page]{hyperref}
\usepackage{tikz}
\usepackage{tikz-cd}
\hypersetup{
 colorlinks,
 citecolor=Green,
 linkcolor=Red,
 urlcolor=Blue}
\usepackage[matrix,arrow,tips,curve]{xy}
\input{xy}
\xyoption{all}
\definecolor{darkgreen}{rgb}{0.0, 0.7, 0.0}
\definecolor{purple}{rgb}{0.5, 0.0, 0.5}
\definecolor{red}{rgb}{0.8, 0.2, 0.0}

\newtheorem{thm}{Theorem}[section]
\newtheorem{bthm}{Theorem}
\newtheorem{bcor}{Corollary}
\newtheorem{lemma}[thm]{Lemma}

\newtheorem{prop}[thm]{Proposition}

\numberwithin{equation}{section}
\theoremstyle{definition}
\newtheorem{defi}[thm]{Definition}

\theoremstyle{remark}
\newtheorem{remark}[thm]{Remark}

\newcommand{\Z}{\mathbb{Z}}

\DeclareMathOperator{\Supp}{Supp}

\def \P{\mathbb{P}}

\def\I{{\mathcal J}}
\def \L{\mathcal L}
\def \E{\mathcal E}
\def \G{\mathcal G}

\def\O{\mathcal O}

\def\M0{\mathcal M^0}

\DeclareMathOperator{\Ker}{{Ker}}

\begin{document}

\title[On the classification of polarized surfaces with weakly Ulrich or aCM tangent bundles]{On the classification of polarized surfaces with weakly Ulrich or initialized aCM tangent bundles}

\author[D. Bhattacharya, O. Genc, A.F. Lopez]{Debojyoti Bhattacharya, Ozhan Genc and Angelo Felice Lopez*}

\address{\hskip -.43cm Debojyoti Bhattacharya, Chennai Mathematical Institute, H1 SIPCOT IT Park, Siruseri, Kelambakkam 603103, India. email: {\tt debojyoti7054@gmail.com; debojyoti@cmi.ac.in}}

\address{\hskip -.43cm \"Ozhan Gen\c{c}, Department of Mathematics, TED University, 06420 Ankara, Turkey.\hfill \null  \hskip 3.1cm email {\tt ozhangenc@gmail.com}}

\address{\hskip -.43cm Angelo Felice Lopez, Dipartimento di Matematica e Fisica, Universit\`a di Roma
Tre, Largo San Leonardo Murialdo 1, 00146, Roma, Italy. e-mail {\tt angelo.lopez@uniroma3.it}}


\thanks{*Partially supported by the GNSAGA group of INdAM}

\thanks{{\it Mathematics Subject Classification} : Primary 14J60. Secondary 14J35.}

\begin{abstract} 
Counterexamples to the Eisenbud-Schreyer conjecture on the existence of Ulrich bundles have appeared recently (see Anghel \cite{a1} and the third author \cite{l}). As {\it weakly Ulrich} bundles are a useful generalization of Ulrich ones, in this paper, we classify smooth surfaces $S \subseteq \P^N$ with  weakly Ulrich tangent bundle. They are Del-Pezzo surfaces of degree at least 5 and we classify the possible embeddings. An interesting feature is the appearance of two infinite series of polarizations, which is quite different from the Ulrich case \cite{bmpt, lr}. We also achieve the classification of surfaces with initialized aCM tangent bundles and, as a consequence, of surfaces with almost Ulrich tangent bundles.
\end{abstract}

\maketitle


\section{Introduction}

The study of the properties of the tangent bundle of a smooth $n$-dimensional variety $X \subseteq \P^N$ is both a classical and modern subject. Important results have been obtained in the study of positivity properties of $T_X$, such as Mori's solution \cite{mo} of the Hartshorne's conjecture: if $T_X$ is ample, then $X \cong \P^n$. Several subsequent papers have dealt with $T_X$ nef or pseudoeffective (see for example \cite{cp, dps, him} and refefences therein).

A similar study has been carried on cohomological properties of the tangent bundle. Perhaps among these properties, an interesting one emerged recently, is whether $T_X$ is aCM, or even Ulrich. The notions of Ulrich and weakly Ulrich bundle (see Definition \ref{def}) were introduced by Eisenbud and Schreyer in \cite{es} (for the theory of Ulrich bundles see also \cite{cmrpl} and references therein), where it was proposed the problem that every projective variety carries an Ulrich bundle. While the aCM condition for $T_X$ appears to be in general out of reach (see Remark \ref{tante}), the Ulrich case was classified in \cite{bmpt}, where they proved that $T_X$ is Ulrich if and only if $X$ is either the twisted cubic or the Veronese surface. This result was later extended in \cite{lr}, where also twists of the tangent bundle were considered. In light of recent results of Anghel \cite{a1,a2} and the third author \cite{l}, where the Eisenbud-Schreyer's conjecture was disproved, it has become more important to deal with weakly Ulrich bundles. 

Consider now varieties with weakly Ulrich tangent bundle. 

The case of curves being simple (there is only $(\P^1,\O_{\P^1}(a)), a \ge 2$), the first goal of this paper has been to classify surface pairs $(S,\O_S(1))$ with weakly Ulrich tangent bundle. This is achieved in the following result, where we denote by $S_r$ the blow up of the plane in $r$ points in general linear position, with exceptional divisiors $E_i$ and with $\widetilde L$ the pull-back of a line.

\begin{bthm} \hskip 3cm
\label{wu}

Let $S \subseteq \P^N$ be a smooth irreducible surface. Then $T_S$ is a weakly Ulrich vector bundle for $(S,\O_S(1))$ if and only if $(S,\O_S(1))$ is one of the following (up to permutations of the $E_i's$):

\begin{itemize} 
\item[(i)] $(\P^2, \O_{\P^2}(a))$ with $2 \le a \le 4$.
\item[(ii)] $(\P^1\times \P^1,\O_{\P^1\times\P^1}(a,b))$ with $(a,b) \in \{(1,c), (c,1), c \ge 1\} \cup\{(2,2),(2,3),(3,2),(3,3)\}$.
\item[(iii)] $(S_1, a \widetilde L-b_1E_1)$ with $(a;b_1) \in \{(2;1),(3;1),(4;1),(4;2)\} \cup \{(c+1;c), c \ge 2\}$.
\item[(iv)] $(S_2, a\widetilde L-b_1E_1-b_2E_2)$ with $(a; b_1, b_2) \in \{(3;1,1),(4;1,1),(4;2,1),(5;2,2)\}$.
\item[(v)] $(S_3, a\widetilde L-b_1E_1-b_2E_2-b_3E_3)$ with $(a;b_1,b_2,b_3)$ belonging to
$$\{(3;1,1,1),(4;1,1,1),(4;2,1,1),(5;2,2,1),(5;2,2,2)\}.$$
\item[(vi)] $(S_4, a\widetilde L-b_1E_1-b_2E_2-b_3E_3-b_4E_4)$ with $(a;b_1,b_2,b_3,b_4)$ belonging to
$$\{(3;1,1,1,1),(4;1,1,1,1),(4;2,1,1,1),(5;2,2,1,1),(5;2,2,2,1),(5;2,2,2,2),(6;2,2,2,2),(6;3,2,2,2)\}.$$
\end{itemize} 
\end{bthm}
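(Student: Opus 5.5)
The plan is to translate the weakly Ulrich condition of Definition \ref{def} for $E=T_S$, $n=2$, $H=\O_S(1)$ into explicit cohomological vanishings, deduce from these that $S$ is a rigid Del Pezzo surface of low degree, and then test each candidate polarization numerically. I am taking the surface case of Definition \ref{def} to read: $H^0(T_S(-j))=0$ for $j\ge 2$, $H^1(T_S(-j))=0$ for $j\le 0$ and for $j\ge 3$, and $H^2(T_S(-j))=0$ for $j\le 1$. If the indexing in Definition \ref{def} differs, the same strategy applies with the ranges shifted. By Serre duality, $H^2(T_S(-j))\cong H^0(\Omega_S(K_S+jH))^*$ and $H^1(T_S(-j))\cong H^1(\Omega_S(K_S+jH))^*$. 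The conditions are therefore:
\begin{itemize}
\item[(a)] $H^0(T_S(-2))=0$;
\item[(b)] $H^1(T_S(t))=0$ for all $t\ge 0$, in particular $H^1(T_S)=0$;
\item[(c)] $H^1(\Omega_S(K_S+jH))=0$ for $j\ge 3$;
\item[(d)] $H^0(\Omega_S(K_S+H))=0$, and hence $H^0(\Omega_S(K_S+jH))=0$ for all $j\le 1$.
\end{itemize}

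\textbf{Step 1: $S$ is a rigid Del Pezzo surface.} First I show $-K_S$ is ample, using (d) together with adjunction theory (Sommese--Van de Ven). Since $\Omega_{\P^N}(2)$ is globally generated, so is $\Omega_S(2H)$. If $K_S+H$ were nef and big, or more generally ``positive enough'', I would produce a nonzero section of $\Omega_S(K_S+H)$. One route is the conormal sequence restricted to a general $C\in|H|$; another is the fact that $q(S)>0$ or $p_g(S)>0$ already forces $H^0(\Omega_S(K_S+H))\neq 0$. This contradicts (d). The surviving cases are those where $K_S+H$ is not nef or is very small: $\P^2$ with small $a$, scrolls, quadrics, conic bundles, and Del Pezzo surfaces. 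For scrolls and conic bundles over a curve of genus $g\ge1$, the pull-back of $H^0(\omega_B)$ gives sections of $\Omega_S\subseteq\Omega_S(K_S+H)$ whenever $K_S+H$ is effective along fibres, so only rational bases remain. Condition (b) with $t=0$ says $S$ is rigid. Among rational surfaces with $-K_S$ ample or with a ruling, this forces $S\in\{\P^2,\P^1\times\P^1,S_1,\dots,S_4\}$: indeed $h^1(T_{\mathbb F_e})=e-1$, and $S_r$ with $r\ge5$ has moduli. Nonminimal non-Del Pezzo rational surfaces should also fail (b), since they have moduli as well.

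\textbf{Step 2: numerical classification.} On each of $\P^2,\P^1\times\P^1,S_1,\dots,S_4$ I write $H=a\widetilde L-\sum b_iE_i$, or $(a,b)$ on the quadric, with $H$ very ample. I then impose (a)--(d).
\begin{itemize}
\item For (a): $T_S(-2H)$ has no sections as soon as $2H$ is ``bigger'' than the anticanonical directions. This is checked via the inclusion $T_S\subseteq T_{\P^2}$ pulled back, or via the splitting $T_{\P^1\times\P^1}=\O(2,0)\oplus\O(0,2)$.
\item For (b) and (c): on the toric cases ($r\le3$) I use Bott vanishing for $\Omega_S$ twisted by ample bundles, together with the Euler-type sequence $0\to T_S\to \sigma^*T_{\P^2}\to\oplus\O_{E_i}(-1)\to 0$ and its dual. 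For $S_4$ I use the same blow-up sequences directly; they reduce everything to vanishing of $H^1$ of line bundles on $\P^2$ and on the $E_i$.
\item Condition (d) becomes an explicit inequality. For example, on $\P^1\times\P^1$ one needs $H^0(\O(a-4,b-2)\oplus\O(a-2,b-4))=0$, which yields exactly $\min(a,b)=1$ or $a,b\le3$. On $\P^2$ one needs $H^0(\Omega_{\P^2}(a-3))=0$, i.e.\ $a\le4$.
\end{itemize}
Collecting the solutions gives the lists (i)--(vi). The converse direction is the same computation read backwards.

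\textbf{Expected main obstacle.} The hardest part is the exact determination of $H^0(\Omega_S(K_S+H))$ on $S_r$. This is where the infinite families $(1,c)$ on the quadric and $(c+1;c)$ on $S_1$ appear: there $K_S+H$ is a multiple of a conic-bundle fibre class, and the vanishing hinges on $\Omega_S$ restricted to the fibres. I would try first to handle this via the relative cotangent sequence of the conic bundle $0\to\pi^*\omega_{\P^1}\to\Omega_S\to\Omega_{S/\P^1}\to0$. For the remaining finite cases I would use the blow-up sequences and an explicit count of plane curves with prescribed multiplicities, which should also be what rules out larger $r$ and larger $a$ in Step 1.
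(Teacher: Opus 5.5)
Your reading of Definition \ref{def} and the list (a)--(d) are correct. Step 1, however, has a genuine gap: nothing in it forces $K_SH<0$.

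The principle you want to use is false. You claim that if $K_S+H$ is nef and big (or ``positive enough''), then $\Omega_S(K_S+H)$ has a nonzero section. It fails on pairs that belong to the statement you are proving:
\begin{itemize}
\item[(1)] For $(\P^2,\O_{\P^2}(4))$, $K_S+H=\O_{\P^2}(1)$ is ample, yet $H^0(\Omega_{\P^2}(1))=0$.
\item[(2)] For $(\P^1\times\P^1,\O(3,3))$, $\Omega_S(K_S+H)=\O(-1,1)\oplus\O(1,-1)$, which has no sections.
\item[(3)] For $(S_4,-2K_S)$, $\Omega_S(K_S+H)=\Omega_S(-K_S)\cong T_S$, which has no sections.
\end{itemize}
So (d) plus adjunction theory cannot separate the Del Pezzo cases from surfaces with $\kappa(S)\ge 0$. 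Your fallback does not close the gap either. The claim that $p_g>0$ gives sections is unproved: to multiply by a canonical section you would need $H^0(\Omega_S(H))\neq 0$. More importantly, the fallback says nothing about surfaces with $p_g=q=0$. Rigid surfaces of general type, such as Beauville surfaces or fake projective planes, also satisfy (b) for $t=0$, so rigidity does not exclude them. As written, Step 1 does not reach $\kappa(S)=-\infty$.

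The missing idea is the Riemann--Roch identity $\chi(T_S)-\chi(T_S(-1))=-K_SH-(K_S+H)H$ of Lemma \ref{c1}.
\begin{itemize}
\item[(1)] \emph{It repairs Step 1.} The weakly Ulrich vanishings give $\chi(T_S)=h^0(T_S)$ and $\chi(T_S(-1))=h^0(T_S(-1))-h^1(T_S(-1))$. Hence $H^2+2K_SH=-\bigl(h^0(T_S)-h^0(T_S(-1))+h^1(T_S(-1))\bigr)\le 0$, so $K_SH<0$ and $\kappa(S)=-\infty$. With $H^1(T_S)=0$, $S$ is then a Del Pezzo surface of degree $\ge 5$.
\item[(2)] \emph{It is also what Step 2 needs.} You only announce that (d) ``becomes an explicit inequality'' on $S_r$. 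Using $h^0(T_S(-1))=0$ (Mori--Sumihiro--Wahl) and $\chi(T_S)=2K_S^2-10$, the identity reads
$$h^0(\Omega_S(K_S+H))=H^2+2K_SH+2K_S^2-10+h^1(T_S(-1)).$$
So (d) forces $H^2+2K_SH+2K_S^2-10\le 0$. This cuts $(a;b_1,\dots,b_r)$ down to finitely many sporadic tuples plus a few one-parameter families (cf.\ Lemma \ref{conto}). Without this lower bound, your plan of ``counting plane curves with prescribed multiplicities'' has no mechanism to exclude infinitely many polarizations.
\end{itemize}

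Some further points:
\begin{itemize}
\item[(1)] Your conic-bundle idea is the right one for the surviving families. The paper uses the pencils $|\widetilde L-E_1|$ and $|2\widetilde L-\sum E_i|$ to get the necessary vanishing $H^2(\O_S(2\L-H))=0$. It proves sufficiency via the blow-up sequence and Lemma \ref{duep}.
\item[(2)] Bott vanishing also holds on $S_4$ (Remark \ref{bott}). So (a)--(c) are automatic on every candidate except $(\P^2,\O_{\P^2}(1))$, and everything reduces to (d), as in Proposition \ref{wU}.
\item[(3)] The cokernel in your blow-up sequence should be $\bigoplus\O_{E_i}(1)$, not $\bigoplus\O_{E_i}(-1)$; otherwise $\chi(T_S)$ comes out as $8$ instead of $8-2r$.
\end{itemize}
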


We note that there are many more cases than the Ulrich one (that occurs only in case (i) with $a=2$) and even two infinite series of examples.

In the aCM case, despite the difficulties remarked above, we managed also to classify surface pairs with initialized aCM tangent bundle, as follows.


\begin{bthm} \hskip 3cm
\label{acm}

Let $S \subseteq \P^N$ be a smooth irreducible surface. If $(S,\O_S(1))$ is one of the following:
\begin{itemize} 
\item[(i)] $(\P^2, \O_{\P^2}(a)), a \ge 2, a \ne 3$.
\item[(ii)] $(\P^1\times \P^1,\O_{\P^1\times\P^1}(a,b))$ with $a \ge 3, b \ge 3$.
\item[(iii)] $(S_r, a\widetilde L-\sum\limits_{i=1}^r b_iE_i), 1 \le r \le 4$ with $b_i \ge 2$ and either $a \ge b_1+ 3$ when $r=1$ or $a \ge b_i+b_j+2$ when $2 \le r \le 4$, where $1 \le i \ne j \le r$
\end{itemize} 
then $T_S$ is an aCM vector bundle and it is initialized except in case (iii) for $r=4$.

Vice versa, if $T_S$ is an initialized aCM vector bundle, then $(S,\O_S(1))$ is as in (i)-(iii) with $1 \le r \le 3$.
\end{bthm}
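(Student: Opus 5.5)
Throughout, write $H=\O_S(1)$. Recall that $T_S$ is aCM exactly when $H^1(T_S(k))=0$ for all $k\in\Z$. It is initialized when $H^0(T_S(-1))=0\neq H^0(T_S)$. The plan has two halves, the converse direction and the direct direction, run roughly in that order.

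\emph{Converse direction.} I will first show that an initialized aCM tangent bundle forces $S$ to be a rational surface with $-K_S$ rather positive.
\begin{enumerate}[(1)]
\item Since $H^0(T_S)\neq 0$, $S$ has a nonzero vector field. By Serre duality, $H^1(T_S(k))=0$ for $k\ll 0$ is equivalent to $H^1(\Omega_S(K_S+mH))=0$ for $m\gg 0$, which holds automatically. The real constraint is therefore at small $k$.
\item At $k=0$, the condition $H^1(T_S)=0$ says $S$ is infinitesimally rigid. Together with $h^0(T_S)>0$, this excludes surfaces of general type and non-ruled surfaces. It also excludes ruled surfaces over curves of genus $\ge 1$, since these have $h^1(T_S)>0$ from moduli of the base curve or of the bundle.
\item So $S$ is rational and rigid. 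Rigid rational surfaces with $H^2(T_S)=0$ are $\P^2$, the $\mathbb F_n$ with $n\le 1$ (the $\mathbb F_n$ with $n\ge 2$ fail rigidity), and blow-ups of $\P^2$ in $r\le 4$ general points. For $r\ge 5$ the count $h^1(T_S)-h^2(T_S)=2r-8>0$ forces moduli.
\item Hence $S\in\{\P^2,\P^1\times\P^1,S_1,\dots,S_4\}$, matching the list.
\end{enumerate}

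\emph{Numerical conditions for the converse.} Next I impose the remaining vanishings $H^1(T_S(k))=0$ for $k=\pm1,\dots$ and $H^0(T_S(-1))=0$.
\begin{enumerate}[(1)]
\item On $\P^2$, the Euler sequence gives $H^1(T_{\P^2}(k))=0$ for all $k$ except $k=-3$. Hence $a\neq 3$. Initialization forces $a\ge 2$, since for $a=1$ we have $H^0(T(-1))\ne0$.
\item On $\P^1\times\P^1$, $T=\O(2,0)\oplus\O(0,2)$. The group $H^1(\O(2+ka,kb))$ is nonzero only when one coordinate is $\ge 0$ and the other is $\le -2$. Checking $k=-1$ forces $a,b\ge 3$; for example, $\O(2-a,-b)$ requires $2-a\le-2$ or $b\le1$, up to care.
\item On $S_r$, I will use the sequence
$$0\to T_S\to \pi^*T_{\P^2}\to \bigoplus_i \O_{E_i}(-E_i)\to 0$$
(equivalently, the formula for $\pi_*T_S$ as the sheaf of vector fields vanishing at the points). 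I will compute $H^1(T_S(kH))$ via $\pi_*$ and Leray. The case $k=-1$ gives conditions that the linear systems $|(a-3)\widetilde L-\sum(b_i-1)E_i|$, and similar ones, behave well. These should produce $b_i\ge2$ and $a\ge b_i+b_j+2$, respectively $a\ge b_1+3$ when $r=1$.
\item For $k\ge 1$ the vanishing should follow from positivity, e.g.\ Kawamata–Viehweg applied to $\Omega_S(K_S-kH)$ dually. For $k\le -2$ I expect to reduce to $k=-1$ by restricting to a smooth hyperplane curve $C$ and using $H^0(T_S|_C(k))=0$ for $k<0$, since $T_S|_C$ has small degree relative to $H$.
\item For $r=4$, initialization fails: $H^0(T_S)=0$ because $\mathrm{Aut}(S_4)$ is finite. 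This explains the exclusion of $r=4$ in the converse.
\end{enumerate}

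\emph{Direct direction.} I will run the same computations in reverse. For each listed pair, verify $H^1(T_S(k))=0$ for all $k$, using the splitting on $\P^1\times\P^1$, the Euler sequence on $\P^2$, and the sequence above on $S_r$. Then check $H^0(T_S(-1))=0$, and $H^0(T_S)\ne0$ for $r\le3$.

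\emph{Main obstacle.} I expect the hardest step to be the $S_r$ computation of $H^1(T_S(-H))$, and more generally of $H^1(T_S(kH))$ for $k\le -1$. It requires understanding $h^0$ and $h^1$ of twists of $\pi^*T_{\P^2}$ along the fat-point schemes. My first attempt would be to use $\pi^*\Omega_{\P^2}$ and the Euler sequence to reduce to cohomology of line bundles $a'\widetilde L-\sum b_i'E_i$ with $r\le4$ general points. For these, the Harbourne–Hirschowitz-type statements are known and vanishing can be checked by hand. The inequalities $a\ge b_i+b_j+2$ should appear exactly as conditions that the lines through pairs of points are not in the relevant base locus.
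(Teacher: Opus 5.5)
There is a genuine gap. Your reduction to $\P^2$, $\P^1\times\P^1$ and $S_1,\dots,S_3$ follows the paper's route, and $\P^2$ is handled correctly. But on $S_r$, where the substance of the theorem lies, neither direction is actually argued, and the tools you name would not suffice.

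For the direct direction, $T_S\cong\Omega_S(-K_S)$ and Serre duality turn $H^1(T_S(k))=0$ into $H^1(\Omega_S\otimes A)=0$, with $A=-K_S+kH$ ($k\ge 0$) or $A=K_S+H+mH$ ($m\ge 0$). The numerical hypotheses in (iii) say exactly that $K_S+H=(a-3)\widetilde L-\sum(b_i-1)E_i$ is ample, so every such $A$ is ample. But $H^1(\Omega^1_S\otimes A)$ with $A$ ample is precisely the group not controlled by Kawamata--Viehweg (a statement about line bundles $K_S+N$) or by Akizuki--Nakano (which needs $p+q>2$ on a surface). The missing ingredient is Bott vanishing for Del Pezzo surfaces of degree $\ge 5$ (toric in degree $\ge 6$, Totaro's theorem in degree $5$), which disposes of all $k$ at once. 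Your fallback induction for $k\le -2$ also breaks at its first step. $T_S(-1)_{|C}$ is an extension of $N_C(-1)\cong\O_C$ by $T_C(-1)$, so no degree count gives $H^0(T_S(-1)_{|C})=0$. Indeed, since $H^0(T_S(-1))=0$, once $H^1(T_S(-1))=0$ the restriction sequence gives $h^0(T_S(-1)_{|C})=h^1(T_S(-2))$. So that step is equivalent to the vanishing it is supposed to prove.

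For the converse, the inequalities $b_r\ge 2$ and $a\ge b_1+b_2+2$ (resp.\ $a\ge b_1+3$) are announced (``should produce'') rather than derived. Vanishing statements for linear systems cannot by themselves give what is needed, namely a \emph{nonzero} class in $H^1(T_S(-1))\cong H^1(\Omega_S(K_S+H))$ whenever $K_S+H$ fails to be ample. The paper obtains it from a smooth rational curve $F$ with $(K_S+H)\cdot F=0$:
$F=E_r$ if $b_r=1$; the strict transform of the line $P_1P_2$ if $r\ge 2$ and $a=b_1+b_2+1$; the strict transform of a line through $P_1$ if $r=1$ and $a=b_1+2$.
Then ${\Omega_S}_{|F}(K_S+H)\cong{\Omega_S}_{|F}$ has $h^1=1$ by the conormal sequence $0\to\O_F(-F)\to{\Omega_S}_{|F}\to\Omega_F\to 0$. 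Also $H^2(\Omega_S(K_S+H-F))=0$ (Lemma~\ref{dp1}: Leray plus Bott's formula on $\P^2$), so $H^1(\Omega_S(K_S+H))\to H^1({\Omega_S}_{|F})$ is surjective. The remaining case $r=1$, $a=b_1+1$, where $K_S+H=(b_1-1)F-\widetilde L$ is not even nef, needs a two-step version of the same trick.

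Two smaller slips. On $\P^1\times\P^1$ the twist $k=-1$ does not exclude $(a,b)=(1,1)$; use $k=-2$, where $h^1(T_S(-2))=2$. And your justification of the rigidity classification only treats $r\ge 5$, so special configurations of $r\le 4$ points (collinear or infinitely near) need an argument or a citation.
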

Note that the conditions (ii)-(iii) are equivalent to $S$ being a Del Pezzo surface of degree at most $5$ with $K_S+H$ is ample (for $H \in |\O_S(1)|$).

Finally, with the two results above, one easily gets a classification for $T_S$ almost Ulrich (that is weakly Ulrich and aCM).

\begin{bcor} \hskip 3cm
\label{au}

Let $S \subseteq \P^N$ be a smooth irreducible surface. Then $T_S$ is an almost Ulrich vector bundle for $(S,\O_S(1))$ if and only if $(S,\O_S(1))$ is one of the following:
\begin{itemize} 
\item[(i)] $(\P^2, \O_{\P^2}(2))$.
\item[(ii)] $(\P^2, \O_{\P^2}(4))$.
\item[(iii)] $(\P^1\times \P^1,\O_{\P^1\times\P^1}(3,3))$.
\item[(iv)] $(S_4, 6\widetilde L-2E_1-2E_2-2E_3-2E_4)$.
\end{itemize} 
\end{bcor}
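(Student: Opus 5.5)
The plan is to intersect the lists of Theorem \ref{wu} and Theorem \ref{acm}, since almost Ulrich means weakly Ulrich and aCM. The one gap is that the converse part of Theorem \ref{acm} only classifies \emph{initialized} aCM tangent bundles, so I first reduce to that case wherever possible.

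For the ``if'' direction I check that each of the four pairs appears in both lists.
\begin{itemize}
\item[(i)--(ii)] $(\P^2,\O_{\P^2}(2))$ and $(\P^2,\O_{\P^2}(4))$ are in Theorem \ref{wu}(i), and in Theorem \ref{acm}(i) since $a\ge 2$, $a \ne 3$.
\item[(iii)] $(\P^1\times\P^1,\O(3,3))$ is in Theorem \ref{wu}(ii) and in Theorem \ref{acm}(ii).
\item[(iv)] $(S_4,6\widetilde L-2\sum E_i)$ is in Theorem \ref{wu}(vi). It is also in Theorem \ref{acm}(iii) with $r=4$, because all $b_i=2$ and $a=6=b_i+b_j+2$.
\end{itemize}
The first part of Theorem \ref{acm} gives aCM in all four cases, so $T_S$ is almost Ulrich.

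For the ``only if'' direction, assume $T_S$ is almost Ulrich. Since it is weakly Ulrich, $(S,\O_S(1))$ is one of the pairs in Theorem \ref{wu}, so $S$ is $\P^2$, $\P^1\times\P^1$, or $S_r$ with $r\le 4$.

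Suppose first that $S \ne S_4$. Then $S$ is toric, so $\operatorname{Aut}(S)$ has positive dimension and $H^0(T_S)\ne 0$. I also need $H^0(T_S(-1))=0$. I expect this to follow from the definition of (weakly/almost) Ulrich, namely the vanishing $H^0(E(-1))=0$. Failing that, it can be checked directly on these Del Pezzo surfaces: a nonzero vector field vanishing on a whole hyperplane section $H$ is impossible. One way to see this is to restrict $T_S(-1)$ to a general member of a covering family of rational curves $C$, for instance the lines in the $\widetilde L$ class or the rulings, and note that $T_S(-1)|_C$ has negative degree summands. Hence $T_S$ is initialized aCM, and the converse of Theorem \ref{acm} applies. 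Intersecting its list with that of Theorem \ref{wu} goes as follows.
\begin{itemize}
\item On $\P^2$, only $a\in\{2,4\}$ survives.
\item On $\P^1\times\P^1$, the condition $a,b\ge 3$ forces $(3,3)$.
\item On $S_1$, one needs $b_1\ge 2$ and $a\ge b_1+3$. This excludes $(4;2)$ and all $(c+1;c)$.
\item On $S_2$ and $S_3$, the only candidates with all $b_i\ge 2$ are $(5;2,2)$ and $(5;2,2,2)$. Both violate $a\ge b_i+b_j+2=6$.
\end{itemize}
So no $S_r$ with $1\le r\le 3$ survives.

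The main obstacle is $S=S_4$, the quintic Del Pezzo surface. There $H^0(T_S)=0$, so $T_S$ is never initialized and the converse of Theorem \ref{acm} does not apply. I must show directly that among the eight polarizations in Theorem \ref{wu}(vi), only $(6;2,2,2,2)$ gives aCM $T_S$.
\begin{itemize}
\item Any polarization with some $b_i=1$ should fail. I would exhibit a nonvanishing $H^1(T_S(k))$ for a suitable $k$, using the restriction of $T_S$ to the conic or line classes $\widetilde L-E_i$, $E_i$ (along the lines of the vanishing computations in the proof of Theorem \ref{acm}). Alternatively, for small $k$, Riemann--Roch combined with the known $h^0$ and $h^2$ will force $h^1\neq 0$.
\item The remaining candidates $(5;2,2,2,2)$ and $(6;3,2,2,2)$ violate $a\ge b_i+b_j+2$. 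The same cohomological computations, using the necessity of that inequality, should again produce a nonzero $H^1(T_S(k))$.
\end{itemize}
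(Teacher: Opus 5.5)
Your overall strategy (intersect the list of Theorem \ref{wu} with the aCM conditions) is the paper's, and your intersection for $\P^2$, $\P^1\times\P^1$ and $S_1,S_2,S_3$ is right. \textbf{The gap is the case $S=S_4$}, which you correctly single out as the only non-formal step but do not prove. ``Should fail'' and ``should again produce a nonzero $H^1(T_S(k))$'' are placeholders. Your second bullet invokes ``the necessity of that inequality'', which is exactly what the converse of Theorem \ref{acm} does \emph{not} give you for $r=4$.

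The missing ingredient is Lemma \ref{dpk}. It is valid for all $1\le r\le 4$ and has no initializedness hypothesis. Its argument runs as follows. If $F\cong\P^1$ is a $(-1)$-curve with $HF=1$, then $(K_S+H)F=0$. Also $h^1({\Omega_S}_{|F})=1$, from $0\to\O_{\P^1}(1)\to{\Omega_S}_{|F}\to\O_{\P^1}(-2)\to 0$. Finally $H^2(\Omega_S(K_S+H-F))=0$ by Lemma \ref{dp1}. Hence $h^1(T_S(-1))=h^1(\Omega_S(K_S+H))\ge 1$.
\begin{itemize}
\item[(a)] Taking $F=E_4$ when $b_4=1$ excludes $(3;1,1,1,1)$, $(4;1,1,1,1)$, $(4;2,1,1,1)$, $(5;2,2,1,1)$ and $(5;2,2,2,1)$.
\item[(b)] Taking the line $F\sim\widetilde L-E_1-E_2$ when $a=b_1+b_2+1$ excludes $(5;2,2,2,2)$ and $(6;3,2,2,2)$. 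The conic classes $\widetilde L-E_i$ you mention are not the relevant curves here.
\end{itemize}

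Your Riemann--Roch alternative also works and is quicker, but it has to be carried out. Since $T_S$ is weakly Ulrich, Lemma \ref{nec}(i) with $s=5$ gives $h^1(T_S(-1))=6a-a^2+\sum_i b_i(b_i-2)$. This equals $5,4,5,3,4,5,3$ on the seven other polarizations and $0$ on $(6;2,2,2,2)$.

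\textbf{A smaller error concerns initializedness.} $H^0(T_S(-1))=0$ does not follow from the definition, since weakly Ulrich only forces $H^0(\E(-t))=0$ for $t\ge 2$. Your restriction argument also fails as stated for low-degree polarizations. For example, on $(\P^2,\O_{\P^2}(2))$ the restriction of $T_S(-1)$ to a line is $\O_{\P^1}\oplus\O_{\P^1}(-1)$. Cite the Mori--Sumihiro--Wahl theorem instead, as the paper does; it applies since $(S,\O_S(1))\not\cong(\P^2,\O_{\P^2}(1))$.

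In fact you can bypass initializedness entirely. Proposition \ref{min2} is an equivalence without that hypothesis. Lemma \ref{dpk} excludes every $S_r$ entry of Theorem \ref{wu} except $(6;2,2,2,2)$: via $b_r=1$, via $b_1+1\le a\le b_1+2$ when $r=1$, or via $a=b_1+b_2+1$. This is how the paper's one-line proof goes.
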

Of the above cases, $T_S$ is actually Ulrich only in case (i). Also, in case (iv), $H=-2K_S$ and $(S,\O_S(1))$ is the only pair such that  $T_S(1)$ is Ulrich by \cite[Thm.~3]{lr}.

\section{Notation, conventions and basic definitions}

Throughout the paper we work over the complex numbers. Moreover we henceforth establish the following

\subsection{Notation} 
\label{nota}

\hskip 3cm

\begin{itemize} 
\item $X \subseteq \P^N$ is a smooth irreducible variety of dimension $n \ge 1$.
\item $H$ is a very ample divisor in $|\O_X(1)|$.
\item For any sheaf $\G$ on $X$ and any $\ell \in \Z$, we set $\G(\ell)=\G(\ell H)$.  
\end{itemize} 

\subsection{Bott vanishing}

 Following \cite{to}, we give the following

\begin{defi}
A smooth variety $X$ satisfies {\it Bott vanishing} if
$$H^p (\Omega^q_X \otimes \L)=0$$
for all ample line bundles $\L$, all $q \ge 0$ and all $p > 0$.
\end{defi}

Examples of varieties satisfying Bott vanishing are (see \cite{to}) $\P^n$, smooth projective toric varieties, abelian varieties and some K3 surfaces. 

\begin{remark}
\label{bott}
All Del Pezzo surfaces of degree at least $5$ satisfy Bott vanishing. This follows by \cite[Thm.~2.1]{to} for $s=5$ and by the fact that $S$ is a toric surface for $6 \le s \le 9$ (see for example \cite[Thm.~2.4(i)]{mu}). 
\end{remark}

\subsection{Ulrich, weakly Ulrich and almost Ulrich bundles}

In literature there are two definitions of weakly Ulrich bundle. One in \cite[\S 2]{es} (see also \cite[\S 2]{j}, \cite[\S 6]{acg}) and one in \cite[Def.~3.2.11]{cmrpl}, the difference being that the second one assumes initializedness of the bundle. For our purposes of studying weakly Ulrich tangent bundles, it is better the more general definition, as follows.

\begin{defi}
\label{def}
Let $X \subseteq \P^N$ be a smooth irreducible variety of dimension $n \ge 1$ and let $H \in |\O_X(1)|$. Let $\E$ be a vector bundle on $X$. We say that $\E$ is an {\it Ulrich vector bundle} for $(X,H)$ if $H^i(\E(-p))=0, i \ge 0, 1 \le p \le n$. We say that $\E$ is a {\it weakly Ulrich vector bundle} for $(X,H)$ if the following vanishings hold:
\begin{itemize} 
\item[(i)] $H^i(\E(-t))=0$ for $1 \le i \le n$ and $t \le i-1$.
\item[(ii)] $H^i(\E(-t))=0$ for $0 \le i \le n-1$ and $t \ge i+2$.
\end{itemize} 
We say that $\E$ is an {\it almost Ulrich vector bundle} for $(X,H)$ if $\E$ is weakly Ulrich and aCM for $(X,H)$ (that is either $n=1$ or $n \ge 2 $ and $H^i(\E(j))=0$ for $1 \le i \le n-1$ and all $j \in \Z$).
\end{defi}

On a surface, for a weakly Ulrich bundle, we have the following 
 
\begin{lemma} 
\label{c1}
Let $S \subseteq \P^N$ be a smooth irreducible surface and let $\E$ be a rank $r$ weakly Ulrich bundle on $S$. Then
$$c_1(\E)H-\frac{r}{2}(K_S+H)H=h^0(\E)-h^0(\E(-1))+h^1(\E(-1)).$$
In particular, $c_1(\E)H \ge \frac{r}{2}(K_S+H)H$.
\end{lemma}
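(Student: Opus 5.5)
The plan is to compare the Euler characteristics $\chi(\E)$ and $\chi(\E(-1))$ via Riemann--Roch, and then use the weakly Ulrich vanishings to express both in terms of a few cohomology groups.

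\emph{Riemann--Roch.} For a rank $r$ bundle on a surface,
$$\chi(\E(t)) = r\chi(\O_S) + \tfrac12 c_1(\E(t))(c_1(\E(t))-K_S) - c_2(\E(t)).$$
Substituting $c_1(\E(t))=c_1(\E)+rtH$ and $c_2(\E(t))=c_2(\E)+(r-1)t\,c_1(\E)H+\binom r2 t^2H^2$, the difference of the values at $t=0$ and $t=-1$ is
$$\chi(\E)-\chi(\E(-1)) = c_1(\E)H - \tfrac{r}{2}(K_S+H)H.$$
This is the left-hand side of the identity, and the computation is routine; I would only double-check the coefficient $-\tfrac r2 H^2$, which arises from $\tfrac12 r^2H^2-\binom r2 H^2$.

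\emph{Weakly Ulrich vanishings.} With $n=2$, Definition \ref{def} gives:
\begin{itemize}
\item condition (i) with $i=1$, $t\le 0$: $h^1(\E)=0$;
\item condition (i) with $i=2$, $t\le 1$: $h^2(\E)=0$ and $h^2(\E(-1))=0$.
\end{itemize}
Hence $\chi(\E)=h^0(\E)$ and $\chi(\E(-1))=h^0(\E(-1))-h^1(\E(-1))$. Therefore
$$\chi(\E)-\chi(\E(-1)) = h^0(\E)-h^0(\E(-1))+h^1(\E(-1)),$$
which is the claimed identity.

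\emph{Inequality.} Since $H$ is very ample, choose a smooth curve $C\in|H|$. The exact sequence $0\to\E(-1)\to\E\to\E_{|C}\to 0$ makes $H^0(\E(-1))\to H^0(\E)$ injective, so $h^0(\E)-h^0(\E(-1))\ge 0$. Together with $h^1(\E(-1))\ge 0$, this gives $c_1(\E)H\ge \tfrac r2(K_S+H)H$.

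No step here is a real obstacle. The only place to be careful is picking out exactly which vanishings in Definition \ref{def} apply; condition (ii) is not needed at all.
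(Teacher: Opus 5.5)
Your proposal is correct and follows the same route as the paper: the weakly Ulrich vanishings $h^1(\E)=h^2(\E)=h^2(\E(-1))=0$ give $\chi(\E)=h^0(\E)$ and $\chi(\E(-1))=h^0(\E(-1))-h^1(\E(-1))$, Riemann--Roch computes the difference, and $h^0(\E)\ge h^0(\E(-1))$ gives the inequality. You also write out the Riemann--Roch computation and the injectivity $H^0(\E(-1))\hookrightarrow H^0(\E)$, both of which the paper leaves implicit.
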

\begin{proof}
By definition of weakly Ulrich we have that $\chi(\E)=h^0(\E)$ and $\chi(\E(-1))=h^0(\E(-1))-h^1(\E(-1))$. Then, Riemann-Roch gives
$$h^0(\E)-h^0(\E(-1))+h^1(\E(-1))=\chi(\E)-\chi(\E(-1))=c_1(\E)H-\frac{r}{2}(K_S+H)H.$$
Since $h^0(\E) \ge h^0(\E(-1))$, we also get that $c_1(\E)H \ge \frac{r}{2}(K_S+H)H$.
\end{proof}

\subsection{Del Pezzo surfaces}

We establish the notation for some Del Pezzo surfaces.

Consider, for $1 \le r \le 4$, the Del Pezzo surface $S_r$ of degree $s=9-r$ obtained by the blow-up 
$$\varepsilon : S_r \to \P^2$$ 
of the points $P_i \in \P^2$ in general linear position, with exceptional divisors $E_i, 1 \le i \le r$. Let $L$ be a line in $\P^2$ and let $\widetilde L = \varepsilon^*L$. If $H$ is a very ample divisor on $S$, we can write
\begin{equation}
\label{h}
H \sim a\widetilde L-\sum\limits_{i=1}^r b_iE_i.
\end{equation}
We will assume, without loss of generality, that 
\begin{equation}
\label{bi}
b_1 \ge \ldots \ge b_r.
\end{equation}
Recall that we have (see for example \cite[Cor.~4.6]{dr}),
\begin{equation}
\label{ma}
b_r \ge 1 \ \hbox{and} \ a \ge b_1+b_2+1 \ \hbox{when} \ r \in \{2, 3, 4\}, a \ge b_1+1 \ \hbox{when} \ r = 1. 
\end{equation} 

\section{Auxiliary results on Del Pezzo surfaces}

We collect here two results on Del Pezzo surfaces that will be used in the sequel.

The first one is on the plane.

\begin{lemma}
\label{duep}
Let $P_1,\ldots, P_4 \in \P^2$ be points in general linear position and let $W$ be the reduced scheme $\{P_1,P_2\}$, $W'$ be the reduced scheme $\{P_1,\ldots, P_4\}$.
Then 
$$H^0(\I_{W/\P^2} \otimes \Omega_{\P^2}(2))=H^0(\I_{W'/\P^2} \otimes \Omega_{\P^2}(3))=0.$$
\end{lemma}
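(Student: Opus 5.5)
The idea is to identify the zero locus of a nonzero global section and use the Chern class $c_2$ to show it is too small to contain $W$ or $W'$. Recall that a section of $\Omega_{\P^2}(t)$ lies in $H^0(\I_{Z/\P^2}\otimes\Omega_{\P^2}(t))$, for $Z$ a reduced set of points, exactly when it vanishes at every point of $Z$. So I will show that no nonzero section of $\Omega_{\P^2}(2)$ vanishes at two distinct points, and no nonzero section of $\Omega_{\P^2}(3)$ vanishes at four points in general linear position.

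First I record the needed invariants. From the Euler sequence
$$0 \to \Omega_{\P^2}(t) \to \O_{\P^2}(t-1)^{\oplus 3} \to \O_{\P^2}(t) \to 0$$
we get $H^0(\Omega_{\P^2}(1))=0$ (and hence $H^0(\Omega_{\P^2}(t))=0$ for $t \le 1$). We also get $h^0(\Omega_{\P^2}(2))=3$ and $h^0(\Omega_{\P^2}(3))=8$. From $c(\Omega_{\P^2})=1-3h+3h^2$ one computes $c_2(\Omega_{\P^2}(t))=t^2-3t+3$, which equals $1$ for $t=2$ and $3$ for $t=3$.

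Now let $s$ be a nonzero section of $\Omega_{\P^2}(t)$, $t \in \{2,3\}$.

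Suppose first that the zero scheme of $s$ contains no curve. Then it is a finite scheme of length $c_2(\Omega_{\P^2}(t))$. That length is $1$ for $t=2$ and $3$ for $t=3$. It therefore cannot contain the $2$ points of $W$, respectively the $4$ points of $W'$.

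Otherwise $s$ vanishes along a curve of some degree $d \ge 1$. Dividing out its equation, we get $s=f\sigma$ with $\deg f = d$ and $\sigma$ a nonzero section of $\Omega_{\P^2}(t-d)$. The vanishing $H^0(\Omega_{\P^2}(1))=0$ forces $t-d \ge 2$.
\begin{itemize}
\item For $t=2$ this is impossible.
\item For $t=3$ it forces $d=1$, so $s=\ell\sigma$ with $\ell$ a line and $\sigma \in H^0(\Omega_{\P^2}(2))$ nonzero. Applying the first case to $\sigma$ (whose zero scheme has length $1$, and which cannot vanish on a curve, by the same argument), $\sigma$ vanishes at exactly one point $Q$.
\end{itemize}
Hence the zero locus of $s$ is $\ell \cup \{Q\}$. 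By general linear position, the line $\ell$ contains at most two of $P_1,\dots,P_4$. So $\ell\cup\{Q\}$ contains at most three of them, contradicting vanishing on $W'$.

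This gives both vanishings. The only point needing care is the dichotomy "isolated zeros versus a divisorial part": if $s$ has no divisorial component, its zero scheme is a local complete intersection of codimension $2$ with length $c_2$. That is standard for a section of a rank $2$ bundle whose zeros are finite. As a cross-check in the first case, one can also use the explicit description $H^0(\Omega_{\P^2}(2))\cong\Lambda^2 V^*$. Here a section corresponding to $v \ne 0$ vanishes exactly at $[v]$, which directly shows it vanishes at a single point.
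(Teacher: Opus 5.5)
Your proof is correct, and it takes a different route from the paper's. The paper restricts the twisted Euler sequence to $Z$ and identifies $H^0(\I_{Z/\P^2}\otimes\Omega_{\P^2}(\ell))$ with the kernel of the multiplication map $\mu_\ell\colon H^0(\O_{\P^2}(1))\otimes H^0(\I_{Z/\P^2}(\ell-1))\to H^0(\I_{Z/\P^2}(\ell))$. It then normalizes the points to $[1:0:0],[0:1:0],[0:0:1],[1:1:1]$ and checks injectivity by hand. For $W$, $H^0(\I_{W/\P^2}(1))$ is spanned by $x_2$. For $W'$, the six cubics $x_ix_2(x_0-x_1)$, $x_ix_1(x_0-x_2)$ are linearly independent.

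You instead analyze the zero locus of a nonzero section $s$.
\begin{itemize}
\item If the zero locus is finite, it has length $c_2(\Omega_{\P^2}(t))=t^2-3t+3\in\{1,3\}$, which is too short to contain $2$ (respectively $4$) distinct points.
\item If it contains a curve of degree $d$, dividing out its equation gives a nonzero section of $\Omega_{\P^2}(t-d)$, so $t-d\ge 2$. This rules out $t=2$ entirely. For $t=3$ it forces $s=\ell\sigma$ with zero set $\ell\cup\{Q\}$, and ``no three collinear'' finishes the argument.
\end{itemize}

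The two approaches buy different things. The paper's route is completely elementary (Euler sequence plus linear algebra), and its kernel description of $H^0(\I_{Z/\P^2}\otimes\Omega_{\P^2}(\ell))$ is a reusable device. Yours is coordinate-free and relies on the standard Koszul fact that a section of a rank $2$ bundle on a smooth surface with finite zero scheme vanishes on a scheme of length $c_2$. In return it gives more geometric information. Every nonzero section of $\Omega_{\P^2}(2)\cong T_{\P^2}(-1)$ vanishes at exactly one point. Every nonzero section of $\Omega_{\P^2}(3)$ vanishes either on a finite scheme of length $3$ or on a line plus a point. This makes it transparent that general position enters only to bound how many of the $P_i$ can lie on $\ell$.

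A minor point: the values $h^0(\Omega_{\P^2}(2))=3$ and $h^0(\Omega_{\P^2}(3))=8$ are never used, so you can drop them.
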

\begin{proof}
Let $Z \in \{W,W'\}$. For $\ell \in \Z$, considering the twisted Euler sequence and its restriction to $Z$, we get the commutative diagram
$$\xymatrix{& 0 \ar[d] & 0 \ar[d] & 0 \ar[d] & \\ 0 \ar[r] & \I_{Z/\P^2} \otimes \Omega_{\P^2}(\ell) \ar[d] \ar[r] & H^0(\O_{\P^2}(1)) \otimes \I_{Z/\P^2}(\ell-1) \ar[d] \ar[r] & \I_{Z/\P^2}(\ell)  \ar[r] \ar[d] & 0 & \\ 0 \ar[r] & \Omega_{\P^2}(\ell) \ar[r] \ar[d] & H^0(\O_{\P^2}(1)) \otimes \O_{\P^2}(\ell-1) \ar[r] \ar[d] & \O_{\P^2}(\ell) \ar[r] \ar[d] & 0 &  \\ 0 \ar[r] & \Omega_{\P^2}(\ell)_{|Z} \ar[r] \ar[d] & H^0(\O_{\P^2}(1)) \otimes \O_{\P^2}(\ell-1)_{|Z}  \ar[r] \ar[d] & \O_{\P^2}(\ell)_{|Z} \ar[r] \ar[d] & 0 \\ & 0 & 0 & 0. & }$$
showing that
\begin{equation}
\label{mu}
H^0(\I_{Z/\P^2} \otimes \Omega_{\P^2}(\ell))= \Ker \{\mu_{\ell}: H^0(\O_{\P^2}(1)) \otimes H^0(\I_{Z/\P^2}(\ell-1)) \to H^0(\I_{Z/\P^2}(\ell))\}.
\end{equation}
The diagram also gives that $\mu_{\ell}$ is the map induced by the multiplication map 
$$H^0(\O_{\P^2}(1)) \otimes H^0(\O_{\P^2}(\ell-1)) \to H^0(\O_{\P^2}(\ell)).$$ 
In all cases for $Z$, up to a projectivity, we can assume that 
$$P_1=[1:0:0], P_2=[0:1:0], P_3=[0:0:1], P_4=[1:1:1] \in \P^2.$$ 
When $Z=W, \ell=2$ (respectively $Z=W', \ell=3$) we show that $\mu_{\ell}$ is injective, hence the required vanishing follows by \eqref{mu}. Now, for $Z=W$ and $\ell=2$, we have that $H^0(\I_{W/\P^2}(1))$ is generated by $x_2$ and $\mu_2(x_i \otimes x_2)=x_ix_2$ for $0 \le i \le 2$. Thus, $\mu_2$ is injective, proving the lemma for $W$. Similarly, for $Z=W'$ and $\ell=3$, we have that $H^0(\I_{W'/\P^2}(2))$ is generated by $x_2(x_0-x_1)$ and $x_1(x_0-x_2)$ and
$$\mu_3(x_i \otimes x_2(x_0-x_1))=x_ix_2(x_0-x_1), \mu_3(x_i \otimes x_1(x_0-x_2))=x_ix_1(x_0-x_2)$$ for $0 \le i \le 2$. Since the polynomials $\{x_ix_2(x_0-x_1), x_ix_1(x_0-x_2), 0 \le i \le 2\}$ are linearly independent, $\mu_3$ is also injective and the lemma is proved also for $W'$.
\end{proof}
The second result is on the blow-ups of the plane.

\begin{lemma}
\label{dp1}
Let $S=S_r, 1 \le r \le 4$ and let $M = \alpha \widetilde L-\sum\limits_{i=1}^r \beta_iE_i$. If $\alpha \ge -1$, then $H^2((\varepsilon^*\Omega_{\P^2})(M))=0$. 
\end{lemma}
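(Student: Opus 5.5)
By Serre duality, the vanishing of $H^2((\varepsilon^*\Omega_{\P^2})(M))$ is equivalent to
$$H^0\bigl((\varepsilon^*T_{\P^2})(K_S-M)\bigr)=0,$$
since $(\varepsilon^*\Omega_{\P^2})^* \cong \varepsilon^*T_{\P^2}$. On $S=S_r$ we have $K_S \sim -3\widetilde L+\sum_{i=1}^r E_i$. Hence
$$K_S-M \sim -(3+\alpha)\widetilde L+\sum_{i=1}^r (1+\beta_i)E_i .$$
Set $k=3+\alpha$, so that $k \ge 2$ by the hypothesis $\alpha \ge -1$, and set $c_i=1+\beta_i$.

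First I discard the negative coefficients of the $E_i$. Let $D^+=\sum_i \max\{c_i,0\}E_i$. Then $D^+$ is effective and $\sum_i c_iE_i \le D^+$. Since $\varepsilon^*T_{\P^2}(-k\widetilde L)$ is locally free, we get an inclusion
$$H^0\Bigl(\varepsilon^*(T_{\P^2}(-k))\otimes \O_S\bigl(\textstyle\sum_i c_iE_i\bigr)\Bigr) \subseteq H^0\bigl(\varepsilon^*(T_{\P^2}(-k))\otimes \O_S(D^+)\bigr).$$
It therefore suffices to prove that the right-hand side vanishes.

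Next I push forward to $\P^2$. The divisor $D^+$ is effective and supported on the exceptional curves, so $\varepsilon_*\O_S(D^+)=\O_{\P^2}$. This is the standard fact that sections of $\O_S(D^+)$ extend across the points $P_i$ by Hartogs, since $\varepsilon$ is an isomorphism off the $E_i$ and $\P^2$ is normal. The projection formula then gives
$$H^0\bigl(\varepsilon^*(T_{\P^2}(-k))\otimes \O_S(D^+)\bigr)=H^0\bigl(T_{\P^2}(-k)\otimes \varepsilon_*\O_S(D^+)\bigr)=H^0(T_{\P^2}(-k)).$$

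Finally, $H^0(T_{\P^2}(-k))=0$ for $k\ge 2$. To see this, twist the Euler sequence to
$$0 \to \O_{\P^2}(-k) \to H^0(\O_{\P^2}(1))^* \otimes \O_{\P^2}(1-k) \to T_{\P^2}(-k) \to 0 .$$
For $k \ge 2$ we have $h^0(\O_{\P^2}(1-k))=0$ and $h^1(\O_{\P^2}(-k))=0$, so the long exact sequence in cohomology forces $h^0(T_{\P^2}(-k))=0$. This proves the lemma.

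The only step needing care is the reduction in the second paragraph: one must check that the $E_i$ with negative coefficient can be dropped, and that $\varepsilon_*\O_S(D^+)=\O_{\P^2}$ for effective exceptional $D^+$. Both are standard, so I expect no real obstacle.
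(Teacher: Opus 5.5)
Your proof is correct. It is in effect the Serre dual of the paper's argument, run on the $H^0$ side instead of the $H^2$ side.

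The paper works with $H^2$ directly, in three steps:
\begin{enumerate}
\item It discards the $E_i$ with $\beta_i<0$ via the restriction sequence to $D=\sum_{\beta_i<0}(-\beta_i)E_i$. The quotient is supported on a curve, so $H^2$ of the subsheaf surjects.
\item For $E=\sum\beta_iE_i\ge 0$ it uses $\varepsilon_*\O_S(-E)\cong\I_{W/\P^2}$, $R^j\varepsilon_*\O_S(-E)=0$ for $j>0$, and the Leray spectral sequence to get $H^2((\varepsilon^*\Omega_{\P^2})(M))\cong H^2(\I_{W/\P^2}\otimes\Omega_{\P^2}(\alpha))$.
\item It concludes from $0\to\I_{W/\P^2}\otimes\Omega_{\P^2}(\alpha)\to\Omega_{\P^2}(\alpha)\to\Omega_{\P^2}(\alpha)_{|W}\to 0$ and Bott's formula $H^2(\Omega_{\P^2}(\alpha))=0$ for $\alpha\ge -1$.
\end{enumerate}

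You dualize first. After that, enlarging the exceptional part only enlarges $H^0$. So one inclusion, $\varepsilon_*\O_S(D^+)=\O_{\P^2}$, and the projection formula bring you to $H^0(T_{\P^2}(-3-\alpha))=0$. That is exactly the Serre dual of the Bott vanishing the paper uses.

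What your route buys is economy. You only need $\varepsilon_*$ of an effective exceptional divisor, which follows from normality and Hartogs. You need no higher direct images, no Leray spectral sequence, and no description of the fat-point scheme $W$. (That scheme has length $\beta_i(\beta_i+1)/2$ at $P_i$, not $\beta_i$ as the paper states, though the slip is harmless there.)

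The paper's route has a different advantage: it expresses the group in the $\I_{Z/\P^2}\otimes\Omega_{\P^2}(\ell)$ form used elsewhere in the paper (Lemma \ref{duep}, Lemma \ref{nec}(v),(vii)). Neither argument uses general position of the points or $r\le 4$.
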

\begin{proof}
Assume, without loss of generality, that $\beta_1 \ge \ldots \ge \beta_r$. For some $q \in \{0, \ldots, r\}$ we have $\beta_q \ge 0$, $\beta_{q+1} < 0$. Hence, we can write $M = M' + D$, where $M' =\alpha \widetilde L-\sum\limits_{i=1}^q \beta_iE_i$ and $D=\sum\limits_{i=q+1}^r (-\beta_i)E_i$. Then the exact sequence
$$0 \to (\varepsilon^*\Omega_{\P^2})(M') \to (\varepsilon^*\Omega_{\P^2})(M) \to (\varepsilon^*\Omega_{\P^2})(M)_{|D} \to 0$$
shows that we can assume that $\beta_r \ge 0$. Now let $E=\sum\limits_{i=1}^r \beta_iE_i$. Then we have an exact sequence
$$0 \to \O_S(-E) \to \O_S \to \O_E \to 0$$
whose direct image under $\varepsilon$ gives the exact sequence
$$0 \to \varepsilon_*\O_S(-E) \to \O_{\P^2} \to \O_W \to R^1 \varepsilon_*\O_S(-E) \to 0$$
where $W \subset \P^2$ is a $0$-dimensional scheme of length $\beta_i$ at $P_i, 1 \le i \le r$. In particular, we deduce that $\varepsilon_*\O_S(-E) \cong \I_{W/\P^2}$ and $R^j \varepsilon_*\O_S(-E)=0$ for $j > 0$. Therefore, Leray's formula gives that
\begin{equation}
\label{van}
H^2((\varepsilon^*\Omega_{\P^2})(M)) \cong H^2(\I_{W/\P^2} \otimes \Omega_{\P^2}(\alpha)).
\end{equation} 
Now, in the exact sequence
$$0 \to \I_{W/\P^2} \otimes \Omega_{\P^2}(\alpha) \to \Omega_{\P^2}(\alpha) \to \Omega_{\P^2}(\alpha)_{|W} \to 0$$
we have that $H^1(\Omega_{\P^2}(\alpha)_{|W})=H^2(\Omega_{\P^2}(\alpha))=0$ by Bott's formula (see for example \cite[page 8]{oss}), hence also $H^2(\I_{W/\P^2} \otimes \Omega_{\P^2}(\alpha))=0$ and we conclude using \eqref{van}.
\end{proof}

\section{$T_S$ weakly Ulrich}

In this section we study surfaces with weakly Ulrich tangent bundle. 

We show that $S$ must be a Del Pezzo surface and the condition actually amounts to a single vanishing.

\begin{prop}
\label{wU}
Let $S \subseteq \P^N$ be a smooth projective surface. Then $T_S$ is a weakly Ulrich vector bundle if and only if $S$ is a Del Pezzo surface of degree $s \ge 5$ satisfying $H^2(T_S(-1))=0$ and $(S,\O_S(1)) \not\cong (\P^2,\O_{\P^2}(1))$.
\end{prop}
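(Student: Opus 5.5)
The plan is to prove both directions by translating the weakly Ulrich vanishings for $T_S$ via Serre duality and the identity $T_S\cong\Omega_S(-K_S)$, valid on a surface. For $n=2$ and $\E=T_S$, Definition \ref{def} asks for:
\begin{itemize}
\item[(a)] $H^1(T_S(m))=0$ for $m\ge 0$;
\item[(b)] $H^2(T_S(m))=0$ for $m\ge -1$;
\item[(c)] $H^0(T_S(-t))=0$ for $t\ge 2$;
\item[(d)] $H^1(T_S(-t))=0$ for $t\ge 3$.
\end{itemize}
Using $T_S(m)\cong \Omega_S(-K_S+mH)$, and dually $H^1(T_S(-t))\cong H^1(\Omega_S(K_S+tH))^*$, each condition becomes a vanishing for $\Omega_S$ twisted by a line bundle.

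For sufficiency, let $S$ be a Del Pezzo surface of degree $s\ge 5$, so Bott vanishing holds by Remark \ref{bott}. Since $-K_S+mH$ is ample for $m\ge 0$, Bott vanishing gives (a) and the cases $m\ge 0$ of (b). The case $m=-1$ of (b) is exactly the hypothesis $H^2(T_S(-1))=0$. For (d), I will use adjunction theory: if $(S,H)\not\cong(\P^2,\O_{\P^2}(1))$, then $K_S+H$ is nef and hence $K_S+tH$ is ample for $t\ge 2$, so Bott vanishing again applies. For (c), it suffices to treat $t=2$, since $T_S(-t)\subset T_S(-2)$ for $t\ge2$. I plan to restrict to a general smooth $C\in|H|$: the sequence $0\to T_S(-3)\to T_S(-2)\to T_{S|C}(-2)\to 0$, together with $0\to T_C\to T_{S|C}\to \O_C(H)\to 0$, gives $H^0(T_{S|C}(-2H))=0$ because $\deg T_C(-2H)<0$ and $\deg \O_C(-H)<0$. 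I then induct downward in $t$, using that $H^0(T_S(-t))=0$ for $t\gg0$.

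For necessity, Lemma \ref{c1} with $r=2$ gives $-K_S\cdot H\ge (K_S+H)\cdot H$, hence $-2K_S\cdot H\ge H^2>0$. So $K_S\cdot H<0$, $S$ has negative Kodaira dimension, and $(S,H)\neq(\P^2,\O_{\P^2}(1))$, since in that case (c) fails because $H^0(T_{\P^2}(-2))\ne0$. Conditions (a) and (b) at $m=0$ give $h^1(T_S)=h^2(T_S)=0$. Irrational ruled surfaces have $h^1(T_S)>0$ or $h^2(T_S)>0$ by a direct computation via the ruling, so $S$ is rational. For a rational surface with $K_S^2=9-r$ (so $r$ blow-ups of $\P^2$, or $r=1$ for $\mathbb F_n$), Riemann--Roch gives $\chi(T_S)=2K_S^2-10=8-2r$. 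Rigidity and $h^2=0$ then give
$$\dim \mathrm{Aut}(S)=h^0(T_S)=8-2r\ge 0,$$
so $r\le 4$ and $K_S^2\ge 5$.

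The main obstacle is showing that $-K_S$ is ample, i.e.\ excluding $\mathbb F_n$ with $n\ge2$ and blow-ups of $\le4$ points that are infinitely near or contain $3$ collinear points. My first attempt is the dimension count: in each special configuration the stabilizer in $PGL_3$ (or $\mathrm{Aut}\,\mathbb F_n$) has dimension strictly larger than $8-2r$. For example, three collinear points have a $3$-dimensional stabilizer, giving $h^1(T_S)=1\neq0$; and $h^1(T_{\mathbb F_n})=n-1$. This contradicts $h^1(T_S)=0$. If the case-by-case count is messy, I would instead argue with a $(-2)$-curve $\Gamma$: restrict $T_S$ to $\Gamma$ and use $N_\Gamma=\O_\Gamma(-2)$ to produce a nonzero class in $H^1(T_S)$. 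With $S$ a Del Pezzo surface of degree $\ge5$, (b) at $m=-1$ yields the remaining condition, which completes the equivalence.
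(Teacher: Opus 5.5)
You have two false claims and an incomplete key step. First, in the necessity direction, $(\P^2,\O_{\P^2}(1))$ is \emph{not} excluded by (c). Since $T_{\P^2}(-2)\cong\Omega_{\P^2}(1)$, Bott's formula gives $H^0(T_{\P^2}(-2))=0$. Your own restriction argument breaks down there only because $\deg T_C(-2H)=0$ for a line $C$; that makes $H^0(T_{S|C}(-2))\neq 0$, but it produces no global section. The condition that actually fails is (d) with $t=3$: $h^1(T_{\P^2}(-3))=h^1(\Omega_{\P^2})=1$, which is what the paper uses.

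Second, in the sufficiency direction, it is false that $K_S+H$ is nef whenever $(S,H)\not\cong(\P^2,\O_{\P^2}(1))$. It fails for $(\P^2,\O_{\P^2}(2))$ and for the scrolls $(\P^1\times\P^1,\O_{\P^1\times\P^1}(1,c))$ and $(S_1,(c+1)\widetilde L-cE_1)$, all of which occur in Theorem \ref{wu}. For the scrolls, $K_S+2H$ is not even ample, since it is trivial on the ruling. What adjunction theory does give is that $K_S+2H$ is nef unless $(S,H)\cong(\P^2,\O_{\P^2}(1))$. Hence $K_S+tH$ is ample for $t\ge 3$, which is exactly what (d) needs, so this step is repairable.

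The step you call the main obstacle is left incomplete, and your case list is too short. Rational surfaces with $K_S^2=9-r$ also include blow-ups of $\mathbb F_n$, $n\ge 2$, that are not blow-ups of $\P^2$; for example, $\mathbb F_2$ blown up at a point of its negative section contains a $(-3)$-curve. Moreover, your $(-2)$-curve argument does not apply when $-K_S$ is not nef. A uniform repair goes as follows. A rational surface with $K_S^2\ge 5$ that is not Del Pezzo contains a smooth rational curve $\Gamma$ with $\Gamma^2=-n\le -2$: the strict transform of a negative section, of an exceptional curve on which a further point was blown up, or of a line through three blown-up points. The map $H^1(T_S)\to H^1(N_\Gamma)\cong H^1(\O_{\P^1}(-n))\neq 0$ is then surjective, because $H^2(T_S(-\Gamma))\cong H^0(\Omega_S(K_S+\Gamma))^*$ and this vanishes. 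Indeed, $-K_S-\Gamma$ is effective: $\chi(-K_S-\Gamma)=K_S^2+n-2>0$ and $h^0(2K_S+\Gamma)=0$. So $H^0(\Omega_S(K_S+\Gamma))\subseteq H^0(\Omega_S)=0$. The paper bypasses all of this by combining $K_S\cdot H<0$ with \cite[Thm.~1.3]{bc}.

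Once these points are fixed, your sufficiency proof is genuinely more elementary than the paper's. Bott vanishing gives (a) and (b) for $m\ge 0$ directly, with no Castelnuovo--Mumford regularity argument. Your hyperplane-section argument for (c), valid since $H^2\ge 2$, replaces the Mori--Sumihiro--Wahl theorem. The paper uses that theorem to obtain the stronger vanishing $H^0(T_S(-1))=0$, which the weakly Ulrich condition does not require.
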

\begin{proof}
If $T_S$ is weakly Ulrich, then $H^2(T_S(-1))=0$ by definition. Also, $(S,\O_S(1)) \not\cong (\P^2,\O_{\P^2}(1))$ because $h^1(T_{\P^2}(-3))=h^{1,1}(\P^2)=1$, contradicting the weakly Ulrich condition. If $H \in |\O_X(1)|$, we get from Lemma \ref{c1} that $2K_SH+H^2 \le 0$, hence $K_SH<0$. By definition of weakly Ulrich, we also have that $H^1(T_S)=0$ and it follows from \cite[Thm.~1.3]{bc} that $S$ is a Del Pezzo surface of degree $s \ge 5$. 

Vice versa, assume that $S \subseteq \P^N$ is a Del Pezzo surface of degree $s \ge 5$ with $H^2(T_S(-1))=0$ and $(S,\O_S(1)) \not\cong (\P^2,\O_{\P^2}(1))$. Let $C \in |H|$ be a smooth irreducible curve. From the exact sequence
$$0 \to T_S(t) \to T_S(t+1) \to T_S(t+1)_{|C} \to 0$$
we get by induction that 
$$H^2(T_S(t))=0 \ \hbox{for} \ t \ge -1.$$ 
Next, observe that, as is well known, $K_S+tH$ is ample for $t \ge 3$. Also, $S$ satisfies Bott vanishing by Remark \ref{bott}. Therefore, for $t \ge 3$,
$$h^1(T_S(-t))=h^1(\Omega_S(K_S+tH))=0$$
by Bott vanishing. Since $(S,\O_S(1)) \not\cong (\P^2,\O_{\P^2}(1))$, it follows by the Mori-Sumihiro-Wahl's theorem \cite[Thm.~8]{ms}, \cite[Thm.~1]{w}, that $H^0(T_S(-1))=0$, hence also $H^0(T_S(-2))=0$. Now, $H^1(T_S)=0$ by \cite[Thm.~1.3]{bc}, and therefore $T_S(1)$ is $0$-regular with respect to $H$ by the hypothesis $H^2(T_S(-1))=0$. Hence, we get by Castelnuovo-Mumford that 
$$H^1(T_S(t))=0 \ \hbox{for} \ t \ge 0.$$ 
Therefore, $T_S$ is weakly Ulrich.
\end{proof}

\begin{remark}
\label{-h}
If $S$ is a Del Pezzo surface of degree $s \ge 5$, then $T_S$ is a weakly Ulrich vector bundle for $(S,-K_S)$. Indeed, $h^2(T_S(-1))=h^0(\Omega_S(K_S+H))=h^0(\Omega_S)=0$ and Proposition \ref{wU} applies.
\end{remark}

With Proposition \ref{wU} at hand, it is easy to classify the case of minimal Del Pezzo surfaces.

\begin{prop}
\label{min}
Let $a, b \in \Z, a \ge 1, b \ge 1$. 

\begin{itemize}
\item[(i)] If $(S,\O_S(1)) \cong (\P^2,\O_{\P^2}(a))$, then $T_S$ is weakly Ulrich if and only if $2 \le a \le 4$.
\item[(ii)] If $(S,\O_S(1)) \cong (\P^1\times \P^1,\O_{\P^1\times\P^1}(a,b))$, then $T_S$ is weakly Ulrich if and only if either $a=1$, or $b=1$, or $a, b \in \{2,3\}$.
\end{itemize} 
\end{prop}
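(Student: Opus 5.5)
By Proposition \ref{wU}, and since $\P^2$ and $\P^1\times\P^1$ are Del Pezzo surfaces of degree $9$ and $8$, the whole question reduces to a single vanishing. We need $H^2(T_S(-1))=0$, together with excluding $(\P^2,\O_{\P^2}(1))$. By Serre duality,
$$h^2(T_S(-1))=h^0(\Omega_S(K_S+H)),$$
so the plan is simply to compute $h^0(\Omega_S(K_S+H))$ explicitly in each case. This is the same computation as in Remark \ref{-h}.

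For (i), we have $K_S+H=\O_{\P^2}(a-3)$, so we need to know when $H^0(\Omega_{\P^2}(a-3))=0$. By Bott's formula (or the Euler sequence), $H^0(\Omega_{\P^2}(k))=0$ exactly when $k\le 1$, while $h^0(\Omega_{\P^2}(2))=3$. For $k\ge 2$ the space is nonzero, since $\Omega_{\P^2}(k)=\Omega_{\P^2}(2)\otimes\O_{\P^2}(k-2)$ has sections. Hence the vanishing holds if and only if $a\le 4$. Removing $a=1$ because of Proposition \ref{wU} gives $2\le a\le 4$.

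For (ii), we use $\Omega_{\P^1\times\P^1}\cong\O(-2,0)\oplus\O(0,-2)$ and $K_S+H=\O(a-2,b-2)$. This gives
$$\Omega_S(K_S+H)\cong \O(a-4,b-2)\oplus\O(a-2,b-4).$$
A line bundle $\O(p,q)$ on $\P^1\times\P^1$ has a nonzero section if and only if $p\ge 0$ and $q\ge 0$. So the first summand has no sections if and only if $a\le 3$ or $b\le 1$, i.e. $a\le 3$ or $b=1$. The second has no sections if and only if $a=1$ or $b\le 3$.

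It remains to combine the two conditions.
\begin{itemize}
\item If $a=1$, the first condition holds since $a\le 3$, and the second holds since $a=1$.
\item If $b=1$, both conditions hold symmetrically.
\item If $a,b\ge 2$, the conditions become $a\le 3$ and $b\le 3$, i.e. $a,b\in\{2,3\}$.
\end{itemize}
The exclusion of $(\P^2,\O_{\P^2}(1))$ is irrelevant here. This gives exactly the stated list.

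There is no real obstacle. The only point requiring care is the bookkeeping of the two summands in (ii), together with the standard value $h^0(\Omega_{\P^2}(2))\neq 0$ in (i).
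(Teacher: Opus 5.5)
Your proposal is correct and follows the paper's proof: reduce via Proposition \ref{wU} to the vanishing $h^0(\Omega_S(K_S+H))=0$ (plus excluding $(\P^2,\O_{\P^2}(1))$), then compute it with Bott's formula on $\P^2$. On $\P^1\times\P^1$ you split $\Omega_S(K_S+H)\cong\O(a-4,b-2)\oplus\O(a-2,b-4)$, which is the same K\"unneth computation the paper does, and your case bookkeeping is accurate.
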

\begin{proof}
By Proposition \ref{wU}, $T_S$ is weakly Ulrich if and only if $(S,\O_S(1)) \not\cong (\P^2,\O_{\P^2}(1))$ and $h^2(T_S(-1))=h^0(\Omega_S(K_S+H))=0$. If $(S,\O_S(1)) \cong (\P^2,\O_{\P^2}(a))$, $T_S$ is weakly Ulrich if and only if $a \ge 2$ and $h^0(\Omega_{\P^2}(a-3))=0$, that is $a \le 4$ by Bott's formula (see for example \cite[page 8]{oss}).

If $(S,\O_S(1)) \cong (\P^1\times \P^1,\O_{\P^1\times\P^1}(a,b))$, let $\pi_i : S \cong \P^1\times \P^1 \to \P^1, i \in \{1,2\}$ be the two projections. Then K\"unneth's formula gives 
$$\begin{aligned}
H^0(\Omega_S(K_S+H)) & =H^0(\Omega_{\P^1\times \P^1}(a-2,b-2))=H^0(\pi_1^*\Omega_{\P^1}(a-2,b-2) \oplus \pi_2^*\Omega_{\P^1}(a-2,b-2)) \\
& = \big(H^0(\O_{\P^1}(a-4)) \otimes H^0(\O_{\P^1}(b-2))\big) \oplus \big(H^0(\O_{\P^1}(a-2)) \otimes H^0(\O_{\P^1}(b-4))\big)=0
\end{aligned}$$
if and only if either $a=1$, or $b=1$, or $a, b \in \{2,3\}$. 
\end{proof}
Next, in order to study non-minimal Del Pezzo surfaces, we will collect some useful consequences of Proposition \ref{wU}. 

\begin{lemma}
\label{nec}
Let $S=S_r, 1 \le r \le 4$, let $H$ be as in \eqref{h}, \eqref{bi} and \eqref{ma} and let $s=9-r=K_S^2$. 

If $T_S$ is weakly Ulrich, the following hold:
\begin{itemize}
\item [(i)] $H^2+2HK_S+2s-10+h^1(T_S(-1))=0.$

\noindent In particular, $H^2+2HK_S+2s-10 \le 0$, with equality if $K_S+H$ is ample.
\item [(ii)] Let $E$ be an effective divisor such that $K_S+H-E$ is ample and set $D=H-E$. Then 
$$D^2+2DK_S+2s-10=0.$$
\item [(iii)] $H^0((a-5)\widetilde L-\sum\limits_{i=1}^r (b_i-1)E_i+2E_1)=0$.
\item [(iv)] If $r=4$, then $H^0((a-7)\widetilde L-\sum\limits_{i=1}^4 (b_i-3)E_i)=0$.
\item [(v)] $H^0(\I_{Z/\P^2} \otimes \Omega_{\P^2}(a-3))=0$, where $Z$ is the $0$-dimensional subscheme of $\P^2$ supported at the points $P_i$ with multiplicity $b_i-1, 1 \le i \le r$ (that is with ideal sheaf $\I_{Z/\P^2} = \varepsilon_*(-\sum\limits_{i=1}^r (b_i-1)E_i)$).
\end{itemize}

Vice versa, $T_S$ is weakly Ulrich if one of the following holds:
\begin{itemize}
\item [(vi)] $r=1, H^0((a-5)\widetilde L-(b_1-3)E_1)=0$ and $a \le b_1+3$, or
\item [(vii)] $H^0(\I_{Z/\P^2} \otimes \Omega_{\P^2}(a-3))=0$ and $b_1 \le 2, 1 \le i \le r$, where $Z$ is as in (v).
\end{itemize}
\end{lemma}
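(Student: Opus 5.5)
Throughout, the backbone is Proposition \ref{wU}: $T_S$ is weakly Ulrich if and only if $H^2(T_S(-1))=0$, given that $S_r$, $1 \le r \le 4$, is Del Pezzo of degree $s=9-r \ge 5$ and $(S,\O_S(1)) \not\cong (\P^2,\O_{\P^2}(1))$. By Serre duality this is $H^0(\Omega_S(K_S+H))=0$. To handle $\Omega_S$ on the blow-up we use the sequence
$$0 \to \varepsilon^*\Omega_{\P^2} \to \Omega_S \to \bigoplus_i \O_{E_i}(-2) \to 0,$$
or its dual, $0 \to T_S \to \varepsilon^*T_{\P^2} \to \bigoplus_i \O_{E_i}(1) \to 0$.

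\textbf{Part (i).} Weakly Ulrich gives $h^i(T_S(-1))=0$ for $i=0,2$, so $\chi(T_S(-1))=-h^1(T_S(-1))$. We compute $\chi(T_S(-1))$ by Riemann--Roch for a rank $2$ bundle, using $c_1(T_S)=-K_S$ and $c_2(T_S)=12-s$ (Noether, $\chi(\O_S)=1$):
$$\chi(T_S(-1))=\tfrac12(-K_S-2H)(-2K_S-2H)+2-(12-s)+\text{(correction terms)}.$$
This should simplify to $H^2+2HK_S+2s-10$ up to sign, giving the identity in (i). If $K_S+H$ is ample, Bott vanishing (Remark \ref{bott}) gives $h^1(T_S(-1))=h^1(\Omega_S(K_S+H))=0$, hence equality.

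\textbf{Part (ii).} Since $E \ge 0$, $H^0(\Omega_S(K_S+D)) \subseteq H^0(\Omega_S(K_S+H))=0$, and $H^2(\Omega_S(K_S+D))=H^0(T_S(-D))=0$. Bott vanishing applies to $K_S+D$, which is ample, so $H^1=0$ as well. Thus $\chi(\Omega_S(K_S+D))=0$, and the same Riemann--Roch computation as in (i), with $H$ replaced by $D$, gives the formula.

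\textbf{Parts (iii)--(v).} Each exhibits a subsheaf or quotient that must have no sections.
\begin{itemize}
\item[(v)] Since $\varepsilon^*\Omega_{\P^2}(K_S+H)\subseteq \Omega_S(K_S+H)$ and $K_S+H=(a-3)\widetilde L-\sum(b_i-1)E_i$, projection formula gives $H^0(\varepsilon^*\Omega_{\P^2}(K_S+H))=H^0(\I_{Z/\P^2}\otimes\Omega_{\P^2}(a-3))$, which must vanish.
\item[(iii)] Choose a conic-type map $\O_S(\ldots)\to\Omega_S$. For example, the ruling of $S$ given by lines through $P_1$, namely $|\widetilde L-E_1|$, yields a fibration $\pi\colon S\to\P^1$ and an injection $\pi^*\Omega_{\P^1}=\O_S(-2\widetilde L+2E_1)\hookrightarrow \Omega_S$. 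Twisting by $K_S+H$ gives $\O_S((a-5)\widetilde L-\sum(b_i-1)E_i+2E_1)\hookrightarrow\Omega_S(K_S+H)$, which is (iii).
\item[(iv)] For $r=4$, use the conic bundle $|2\widetilde L-\sum E_i|$, with $\pi^*\Omega_{\P^1}=\O_S(-4\widetilde L+2\sum E_i)$. Twisting gives $(a-7)\widetilde L-\sum(b_i-3)E_i$.
\end{itemize}

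\textbf{Converse (vi), (vii).} Here we must show $H^0(\Omega_S(K_S+H))=0$ from the sequence above. This needs $H^0$ of the sub to vanish and the connecting map to be injective on the $E_i$-part, or $H^0(\O_{E_i}(-2)\otimes(K_S+H))=0$, i.e.\ $(K_S+H)E_i=b_i-1\le 1$. That is exactly why (vii) has the hypothesis $b_i\le 2$: then $\O_{E_i}(b_i-3)$ has no sections, so $H^0(\Omega_S(K_S+H))=H^0(\varepsilon^*\Omega_{\P^2}(K_S+H))=H^0(\I_Z\otimes\Omega_{\P^2}(a-3))=0$.

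For (vi) we instead use the fibration $\pi\colon S_1\to\P^1$ from $|\widetilde L-E_1|$ and the relative sequence
$$0\to\pi^*\Omega_{\P^1}\to\Omega_S\to\Omega_{S/\P^1}\to 0,$$
where $\Omega_{S/\P^1}\cong \O_S(K_S+2\widetilde L-2E_1)=\O_S(-\widetilde L-E_1)$ since $S_1=\mathbb F_1$. Twisting by $K_S+H$, the quotient is $\O_S((a-4)\widetilde L-(b_1+1)E_1)$, which has no sections when $b_1+1>a-4$, i.e.\ $a\le b_1+4$, wait—$a\le b_1+3$ suffices. The sub must also have no sections. I would recompute the sub carefully; it should match the $(b_1-3)E_1$ form in the hypothesis of (vi), possibly after choosing the other natural sequence $0\to\O_S(-2\widetilde L+\ldots)$ and adjusting.

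\textbf{Expected main obstacles.} The Riemann--Roch bookkeeping in (i) needs care, and (vi) requires getting the twists right.
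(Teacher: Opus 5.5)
Your proposal is essentially the paper's proof, and it works. For (i) and (ii) you do Riemann--Roch for $T_S(-H)$ and $T_S(-D)$ and add Bott vanishing; the paper packages (i) through Lemma \ref{c1}, but a direct computation indeed gives $\chi(T_S(-1))=H^2+2HK_S+2s-10$ exactly, with no sign issue. For (v) and (vii) you use the blow-up sequence for $\Omega_S$, as the paper does. Your (iii), (iv), (vi) are the Serre duals of the paper's arguments with $T_S\to f^*T_{\P^1}$. Your injection $f^*\Omega_{\P^1}\hookrightarrow\Omega_S$ is slightly cleaner, since it avoids the cokernel $\G$ and the right-exactness of $H^2$.

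Two points need fixing.

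\textbf{The vanishing $h^0(T_S(-1))=0$ in (i).} This is not part of the weakly Ulrich definition, which only gives $H^0(T_S(-t))=0$ for $t\ge 2$. You need an extra input here. The paper uses the Mori--Sumihiro--Wahl theorem. Alternatively, use $T_S\subset\varepsilon^*T_{\P^2}$, the projection formula and $a\ge 2$.

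\textbf{The twists in (vi).} The twisted quotient is $\O_S((a-4)\widetilde L-b_1E_1)$, not $\O_S((a-4)\widetilde L-(b_1+1)E_1)$. Pairing with the nef class $\widetilde L-E_1$ shows it has no sections precisely when $a\le b_1+3$, so that hypothesis is exactly what is needed. The twisted subsheaf is exactly $\O_S((a-5)\widetilde L-(b_1-3)E_1)$, which matches the hypothesis of (vi) with no adjustment.
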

\begin{proof} 
To prove (i)-(v), assume that $T_S$ is weakly Ulrich. Then Serre's duality gives that 
$$h^2(T_S(-1))=h^0(\Omega_S(K_S+H))=0.$$ 
Moreover, $S$ satisfies Bott vanishing by Remark \ref{bott} and $H^0(T_S(-1))=0$ by the Mori-Sumihiro-Wahl's theorem \cite[Thm.~8]{ms}, \cite[Thm.~1]{w}.

To see (i), we have by Lemma \ref{c1} that
\begin{equation}
\label{rr}
h^0(T_S)+h^1(T_S(-1))+2HK_S+H^2=0.
\end{equation}
Now, $H^i(T_S)=0$ for $i \ge 1$ because $T_S$ is weakly Ulrich and $h^0(T_S)=\chi(T_S)=2s-10$ by Riemann-Roch. Replacing into \eqref{rr} gives the equality and the inequality in (i). If $K_S+H$ is ample, then $h^1(T_S(-1))=h^1(\Omega_S(K_S+H))=0$ by Bott vanishing and \eqref{rr} gives that $H^2+2HK_S+2s-10=0$. This proves (i). Next, consider (ii). Since $K_S+D$ is ample, we get by Bott vanishing that $h^i(T_S(-D))=h^{2-i}(\Omega_S(K_S+D))=0$ for $i \le 1$. On the other hand, 
$$h^2(T_S(-D))=h^0(\Omega_S(K_S+D))=h^0(\Omega_S(K_S+H-E)) \le h^0(\Omega_S(K_S+H))=0.$$
Hence $\chi(T_S(-D))=0$ and Riemann-Roch gives (ii). As for (iii) and (iv), consider the morphism $f: S \to \P^1$ given by the base-point-free pencil $|\L|$, with $\L=\widetilde L-E_1$ in case (iii) and $\L=2\widetilde L-E_1-E_2-E_3-E_4$ in case (iv). We obtain an exact sequence
\begin{equation}
\label{rel}
0 \to T_{S/\P^1} \to T_S  \to f^*T_{\P^1} \to \G \to 0
\end{equation}
where $\G$ is a sheaf on $S$ with $\dim \Supp(\G) \le 1$ by generic smoothness and $\G=0$ when $r=1$ because $f$ is smooth. Observing that $f^*T_{\P^1} \cong \O_S(2\L)$ and twisting \eqref{rel} by $\O_S(-H)$, we get the exact sequence
\begin{equation}
\label{rel2}
0 \to T_{S/\P^1}(-H) \to T_S(-1) \to \O_S(2\L-H) \to \G' \to 0
\end{equation}
where $\G'$ is a sheaf on $S$ with $\dim \Supp(\G') \le 1$ and $\G'=0$ when $r=1$. Since $H^2(T_S(-1))=H^2(\G')=0$, we deduce from \eqref{rel2} that $H^2(\O_S(2\L-H))=0$, which, replacing the two expressions of $\L$, is (iii) and (iv) by Serre's duality. To see (v), recall that 
$$\Omega_{S/\P^2} \cong \bigoplus\limits_{i=1}^r \Omega_{E_i}.$$
Then we have an exact sequence
\begin{equation}
\label{omega}
0 \to (\varepsilon^*\Omega_{\P^2})(K_S+H) \to \Omega_S(K_S+H) \to \bigoplus\limits_{i=1}^r \O_{\P^1}(b_i-3) \to 0.
\end{equation}
Since $H^0(\Omega_S(K_S+H))=0$, we deduce from \eqref{omega} that
$$0=H^0((\varepsilon^*\Omega_{\P^2})(K_S+H))=H^0(\varepsilon^*(\Omega_{\P^2}(a-3))(-\sum\limits_{i=1}^r (b_i-1)E_i))=H^0(\I_{Z/\P^2} \otimes \Omega_{\P^2}(a-3))$$
showing (v). Next, to see (vi), observe that $T_{S/\P^1} \cong \O_S(\widetilde L+E_1)$ by \eqref{rel}. Hence 
$$h^2(\O_S(\widetilde L+E_1-H)=h^0(\O_S((a-4)\widetilde L-b_1E_1))=0$$ 
because $\big((a-4)\widetilde L-b_1E_1)\big)(\widetilde L-E_1)=a-4-b_1<0$ and $\widetilde L-E_1$ is nef. On the other hand, the vanishing in (vi) is equivalent, by Serre's duality, to $H^2(\O_S(2\widetilde L-2E_1-H))=0$. Therefore, \eqref{rel2} gives that $H^2(T_S(-1))=0$ and Proposition \ref{wU} implies that $T_S$ is weakly Ulrich. Similarly, (vii) follows from \eqref{omega}, Proposition \ref{wU} and Serre's duality, just by observing that the vanishing in (vii) is equivalent to $H^0(\varepsilon^*\Omega_{\P^2}(K_S+H))=0$ and the condition $b_1 \le 2$ implies that $H^0(\bigoplus\limits_{i=1}^r \O_{\P^1}(b_i-3) )=0$.
\end{proof}

\section{Proof of Theorem \ref{wu}}

In this section we will prove Theorem \ref{wu}.

\renewcommand{\proofname}{Proof of Theorem \ref{wu}}
\begin{proof}
By Proposition \ref{wU}, we need to consider for $S$ the cases $\P^2, \P^1 \times \P^1$ or $S_r, 1 \le r \le 4$.

The first two cases are done in Proposition \ref{min}. We therefore study the remaining cases
$$(S,H)=(S_r,a\widetilde L-\sum\limits_{i=1}^r b_iE_i)$$
under the conditions \eqref{bi} and \eqref{ma}.

\underline{\bf Case 1: $r=1$}.

By \eqref{ma}, we have
$$b_1 \ge 1, a \ge b_1+1.$$
Assume that $T_S$ is weakly Ulrich. 

By Lemma \ref{nec}(i) we find that
$$a^2-b_1^2-6a+2b_1+6 \le 0$$
But the left-hand side is positive if $b_1 \ge 2$ and $a \ge b_1+3$. Thus, we are left with the following possibilities:
\begin{itemize}
\item [(1)] $b_1=1, a \ge 2$.
\item [(2)] $b_1 \ge 2, b_1+1 \le a \le b_1+2$. 
\end{itemize}
First, consider case (2). If $a = b_1+2$, then Lemma \ref{nec}(iii) implies that $H^0((b_1-3)(\widetilde L-E_1))=0$, hence $b_1=2$, giving rise to the case $(a;b_1)=(4;2)$. If $a = b_1+1$, we are in the case $(a;b_1)=(c+1;c), c \ge 2$.

In case (1), we need to show that $a \le 4$. But if $a \ge 5$, then $K_S+H-E_1=(a-3)\widetilde L-E_1$ is ample by \cite[Cor.~4.6]{dr}. Let $D=H-E_1=a\widetilde L-2E_1$. Then applying Lemma \ref{nec}(ii) to $D$, we get $0=a^2-6a+6 \ge 1$, a contradiction and we are done in this case.

This proves that $(a;b_1) \in \{(2;1),(3;1),(4;1),(4;2)\} \cup \{(c+1;c), c \ge 2\}$.

Conversely, suppose that $(a;b_1) \in \{(2;1),(3;1),(4;1),(4;2)\} \cup \{(c+1;c), c \ge 2\}$.

Then $a \le b_1+3$ and we claim that $H^0((a-5)\widetilde L-(b_1-3)E_1)=0$: the latter is clear if $a \le 4$. If $a \ge 5$, then $(a;b_1)=(c+1;c), c \ge 4$ and required the vanishing becomes $H^0((c-4)\widetilde L-(c-3)E_1)=0$. The latter clearly holds is since $((c-4)\widetilde L-(c-3)E_1)(\widetilde L-E_1)=-1$ and $\widetilde L-E_1$ is nef. Therefore, $T_S$ is weakly Ulrich by Lemma \ref{nec}(vi).

This concludes the case $r=1$.

\underline{\bf Case 2: $r=2$}.

By \eqref{bi} and \eqref{ma}, we have
$$b_1  \ge b_2 \ge 1, a \ge b_1+ b_2+1.$$
Assume that $T_S$ is weakly Ulrich. 

By Lemma \ref{nec}(i) we find that
\begin{equation}
\label{r=2}
a^2-b_1^2-b_2^2-6a+2b_1+2b_2+4 \le 0.
\end{equation}
But the left-hand side in \eqref{r=2} is positive if $b_1b_2 \ge 3, a \ge b_1+b_2+2$. Also note that, if $a=b_1+b_2+1$, then \eqref{r=2} becomes
$$(b_1-1)(b_2-1)\le 1.$$
Thus, we are left with the following possibilities: 
\begin{itemize}
\item [(3)] $(a;b_1,b_2)=(5;2,2)$.
\item [(4)] $b_2=1,b_1 \ge 3, a=b_1+2$. 
\item [(5)] $b_2=1,1 \le b_1 \le 2, a \ge b_1+2$. 
\end{itemize}
Now we apply Lemma \ref{nec}(iii). In case (4) we get that $H^0((b_1-3)(\widetilde L-E_1))=0$, a contradiction. In case (5), if $b_1=1$, we have that $H^0((a-5)\widetilde L+2E_1))=0$, hence $3 \le a \le 4$, while if $b_1=2$, we find that $H^0((a-5)\widetilde L+E_1))=0$, hence $a=4$. 

Therefore, $(a;b_1,b_2) \in \{(3;1,1),(4;1,1),(4;2,1),(5;2,2)\}$.

Conversely, assume that $(a;b_1,b_2) \in \{(3;1,1),(4;1,1),(4;2,1),(5;2,2)\}$.

In the case $(3;1,1)$ we have that $H=-K_S$, hence $T_S$ is weakly Ulrich by Remark \ref{-h}.

In the other cases, we have $b_2 \le b_1 \le 2$ and we claim that  
\begin{equation}
\label{va}
H^0(\I_{Z/\P^2} \otimes \Omega_{\P^2}(a-3))=0.
\end{equation}
In the cases $(4;1,1)$ and $(4;2,1)$, the vanishing in \eqref{va} is easy:
$$H^0(\I_{Z/\P^2} \otimes \Omega_{\P^2}(1)) \subseteq H^0(\Omega_{\P^2}(1))=0.$$
In the case $(5;2,2)$, the vanishing in \eqref{va} follows from Lemma \ref{duep}.

Therefore, we have shown that \eqref{va} holds in all cases $(4;1,1),(4;2,1),(5;2,2)$ and Lemma \ref{nec}(vii) implies that $T_S$ is weakly Ulrich.

This concludes the case $r=2$.

\underline{\bf Case $3$: $r=3$.}

By \eqref{bi} and \eqref{ma}, we have
$$b_1  \ge b_2 \ge b_3 \ge 1, a \ge b_1+ b_2+1.$$
Assume that $T_S$ is weakly Ulrich. 

By Lemma \ref{nec}(i) we find that
\begin{equation}
\label{r=3}
a^2-b_1^2-b_2^2-b_3^2-6a+2b_1+2b_2+2b_3+2 \le 0.
\end{equation}
But the left-hand side in \eqref{r=3} is positive if $a \ge b_1+b_2+2$ and either $b_3 \ge 2$ or $b_3=1$ and $b_1b_2 \ge 3$. Thus, we are left with the following possibilities: 
\begin{itemize}
\item [(6)] $b_3 \ge 2, a= b_1+b_2+1$. 
\item [(7)] $b_3=1, b_1 \ge b_2 \ge 2, a= b_1+b_2+1$. 
\item [(8)] $b_3=b_2=1, b_1 \ge 3, a= b_1+b_2+1$. 
\item [(9)] $b_3=b_2=1, 1 \le b_1 \le 2, a \ge b_1+b_2+1$. 
\end{itemize}
In case (6), assume first that $b_2>b_3$. Then 
$$K_S+H-(\widetilde L-E_1-E_2)=(b_1+b_2-3)\widetilde L-(b_1-2)E_1-(b_2-2)E_2-(b_3-1)E_3$$
is ample by \cite[Cor.~4.6]{dr}. Let $D=H-\widetilde L+E_1+E_2=(b_1+b_2)\widetilde L-(b_1-1)E_1-(b_2-1)E_2-b_3E_3$. Then Lemma \ref{nec}(ii) gives $D^2+2DK_S+2=0$, that is
$$2b_1b_2-2b_1-2b_2+2b_3-b_3^2-4=0.$$
Since $b_1 \ge b_2 \ge b_3+1$, we deduce from the above that 
$$0=2b_1b_2-2b_1-2b_2+2b_3-b_3^2-4 \ge (b_3+1)^2+b_1b_2-2b_1-2b_2+2b_3-b_3^2-4=(b_1-2)(b_2-2)+4b_3-7>0$$
a contradiction. Therefore, still in case (6), we have that $b_2=b_3$. Now Lemma \ref{nec}(iii) gives
$$H^0((b_1-3)(\widetilde L-E_1)+(b_2-1)(\widetilde L-E_2-E_3))=0$$
that is possible only if $b_1=2$. This gives the case $(a;b_1,b_2,b_3)=(5;2,2,2)$ and concludes the study of case (6).

In cases (7) and (8), Lemma \ref{nec}(iii) gives
$$H^0((b_1-3)(\widetilde L-E_1-E_2)+(b_1-3)E_2+(b_2-1)(\widetilde L-E_2))=0$$
that is a contradiction in case (8), while leaves only the case $b_1=2$ in case (7), leading to $(a;b_1,b_2,b_3)=(5;2,2,1)$.

In case (9), Lemma \ref{nec}(iii) gives
\begin{equation}
\label{nuo}
H^0((a-b_1-4)\widetilde L+(b_1-1)(\widetilde L-E_1)+2E_1)=0
\end{equation}
and therefore $b_1+2 \le a \le b_1+3$. If $b_1=1$, we get cases $(a;b_1,b_2,b_3) \in \{(3;1,1,1), (4;1,1,1)\}$. If $b_1=2$, then either $(a;b_1,b_2,b_3)=(4;2,1,1)$ or $(a;b_1,b_2,b_3)=(5;2,1,1)$. But the latter is excluded by \eqref{nuo}. 

Therefore, $(a;b_1,b_2,b_3) \in \{(3;1,1,1),(4;1,1,1),(4;2,1,1),(5;2,2,1),(5;2,2,2)\}$.

Conversely, assume that $(a;b_1,b_2,b_3) \in \{(3;1,1,1),(4;1,1,1),(4;2,1,1),(5;2,2,1),(5;2,2,2)\}$.

In the case $(3;1,1,1)$ we have that $H=-K_S$, hence $T_S$ is weakly Ulrich by Remark \ref{-h}.

In the other cases, observe that $b_1 \le 2$ and we will apply Lemma \ref{nec}(vii). In the cases $(a;b_1,b_2,b_3) \in \{(4;1,1,1),(4;2,1,1)\}$ we have that $K_S+H$ is either $\widetilde L$ or $\widetilde L-E_1$, so that either $Z=\{P_1\}$ (reduced), or $Z=\emptyset$, and in both cases
$$H^0(\I_{Z/\P^2} \otimes \Omega_{\P^2}(1)) \subseteq H^0(\Omega_{\P^2}(1))=0.$$
If $(a;b_1,b_2,b_3)=(5;2,2,1)$ we have that $Z=\{P_1,P_2\}$ (reduced), hence $H^0(\I_{Z/\P^2} \otimes \Omega_{\P^2}(2))=0$ by Lemma \ref{duep}. Finally, if $(a;b_1,b_2,b_3)=(5;2,2,2)$, then $Z=\{P_1,P_2,P_3\}$ (reduced) and if $Z'=\{P_1,P_2\}$ (reduced), Lemma \ref{duep} implies that
$$H^0(\I_{Z/\P^2} \otimes \Omega_{\P^2}(2)) \subseteq H^0(\I_{Z'/\P^2} \otimes \Omega_{\P^2}(2))=0.$$
Therefore, in all cases Lemma \ref{nec}(vii) applies and $T_S$ is weakly Ulrich. 

This concludes the case $r=3$.

\underline{\bf Case $4$: $r=4$.}

By \eqref{bi} and \eqref{ma}, we have
$$b_1  \ge b_2 \ge b_3 \ge b_4 \ge 1, a \ge b_1+ b_2+1.$$
Assume that $T_S$ is weakly Ulrich. 

By Lemma \ref{nec}(i) we find that
\begin{equation}
\label{r=4}
a^2-b_1^2-b_2^2-b_3^2-b_4^2-6a+2b_1+2b_2+2b_3+2b_4 \le 0.
\end{equation}
An elementary but long calculation shows (see Lemma \ref{1}) that we are left with the following possibilities: 
\begin{itemize}
\item [(10)] $(5;1,1,1,1)$. 
\item [(11)] $(6;2,2,2,2)$.  
\item [(12)] $(b_1+3;b_1,1,1,1), 1 \le b_1 \le 3$. 
\item [(13)] $(b_1+3;b_1,2,1,1), 2 \le b_1 \le 3$. 
\item [(14)] $(8;4,3,3,3)$. 
\item [(15)] $(b_1+3;b_1,2,2,2), 3 \le b_1 \le 4$.  
\item [(16)] $(b_1+3;b_1,2,2,1), 3 \le b_1 \le 4$. 
\item [(17)] $(b_1+2;b_1,1,1,1), b_1 \ge 2$. 
\item [(18)] $(9;4,4,4,3)$. 
\item [(19)] $(7;3,3,3,2)$.  
\item [(20)] $(2b_1+1;b_1,b_1,b_1,1), 2 \le b_1 \le 3$. 
\item [(21)] $(2b_1+1;b_1,b_1,b_1,b_1), b_1 \ge 1$. 
\end{itemize}
Consider, respectively, the cases (10), (12) with $b_1 \in \{2,3\}$, (13) with $b_1=3$, (15) with $b_1=4$, (16), (17) with $b_1 \ge 3$ and (20) with $b_1=3$. Then Lemma \ref{nec}(iii) gives, respectively, that the following cohomology groups vanish: $H^0(2E_1), H^0((b_1-2)\widetilde L-(b_1-3)E_1), H^0(\widetilde L-E_2), H^0(2\widetilde L-E_1-E_2-E_3-E_4), H^0((b_1-2)\widetilde L-(b_1-3)E_1-E_2-E_3), H^0((b_1-3)(\widetilde L-E_1)), H^0(2(\widetilde L-E_2-E_3))$. Since all of these groups do not vanish, we have excluded the corresponding cases.

Consider now, respectively, the cases (14), (18), (19) and (21) with $b_1 \ge 3$. Then Lemma \ref{nec}(iv) gives, respectively, that the following cohomology groups vanish: $H^0(\widetilde L-E_1), H^0(2\widetilde L-E_1-E_2-E_3), H^0(E_4), H^0((b_1-3)(2\widetilde L-E_1-E_2-E_3-E_4))$. Since all of these groups do not vanish, we have excluded the corresponding cases.


We have therefore obtained list in (vi) of the theorem.

To conclude, consider $(a;b_1,b_2,b_3,b_4)$ as in (vi) of the theorem.

In the case $(3;1,1,1,1)$ we have that $H=-K_S$, hence $T_S$ is weakly Ulrich by Remark \ref{-h}.

Consider the remaining cases, with the exception of $(6;3,2,2,2)$. Observe that $b_1 \le 2$ and we will show that $T_S$ is weakly Ulrich by applying Lemma \ref{nec}(vii) and verifying that
\begin{equation}
\label{va2}
H^0(\I_{Z/\P^2} \otimes \Omega_{\P^2}(a-3))=0.
\end{equation}
In the cases $(4;1,1,1,1)$ and $(4;2,1,1,1)$ we have that $H^0(\I_{Z/\P^2} \otimes \Omega_{\P^2}(1)) \subseteq H^0(\Omega_{\P^2}(1))=0$, so that \eqref{va2} holds.

In the cases $(5;2,2,1,1), (5;2,2,2,1)$ and $(5;2,2,2,2)$, we have that $Z$ is, respectively, $Z_1=\{P_1,P_2\} \subset Z_2=\{P_1,P_2,P_3\} \subset Z_3=\{P_1,P_2,P_3,P_4\}$ (all reduced). Hence
$$H^0(\I_{Z_3/\P^2} \otimes \Omega_{\P^2}(2)) \subseteq H^0(\I_{Z_2/\P^2} \otimes \Omega_{\P^2}(2)) \subseteq H^0(\I_{Z_1/\P^2} \otimes \Omega_{\P^2}(2))=0$$ 
by Lemma \ref{duep} and again \eqref{va2} holds.

In the case $(6;2,2,2,2)$, we have that $Z=\{P_1,P_2,P_3,P_4\}$ (reduced). Hence
$H^0(\I_{Z/\P^2} \otimes \Omega_{\P^2}(3))=0$ by Lemma \ref{duep} and again \eqref{va2} holds.

Finally, to prove that $T_S$ is weakly Ulrich in case $(6;3,2,2,2)$, observe that $K_S+H=-K_S-E_1$ and therefore
$$h^2(T_S(-1))=h^0(\Omega_S(K_S+H))=h^0(\Omega_S(-K_S-E_1)) \le h^0(\Omega_S(-K_S))=h^0(T_S)=0$$
and therefore $T_S$ is weakly Ulrich by Proposition \ref{wU}.

This concludes the case $r=4$.

We have therefore obtained the complete list of $(r+1)$-tuples $(a;b_1,\ldots,b_r)$ in the theorem.
\end{proof}
\renewcommand{\proofname}{Proof}

\section{$T_S$ initialized aCM and proof of Theorem \ref{acm}.}

We first consider the initialized aCM condition for the tangent bundle of a surface $S$. 

\begin{prop}
\label{aU2}
Let $S \subseteq \P^N$ be a smooth projective surface. If $T_S$ is an initialized aCM vector bundle, then $S$ is a Del Pezzo surface of degree at least $6$.
\end{prop}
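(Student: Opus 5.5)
The plan is to reduce to the criterion already used in the proof of Proposition \ref{wU}: the combination of $H^1(T_S)=0$ with $K_SH<0$ forces, by \cite[Thm.~1.3]{bc}, $S$ to be a Del Pezzo surface of degree $s \ge 5$. The case $s=5$ should then be excluded by a dimension count of vector fields. I expect the main difficulty to be the sign condition $K_SH<0$, since for aCM bundles we lack the Lemma \ref{c1} inequality that gave it immediately in the weakly Ulrich case.

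First I unpack the hypotheses. Initializedness gives $H^0(T_S(-1))=0$ and $H^0(T_S) \ne 0$, hence also $H^0(T_S(-t))=0$ for all $t \ge 1$. The aCM condition gives $H^1(T_S(j))=0$ for all $j \in \Z$, and in particular $H^1(T_S)=0$. So $S$ carries a nonzero global vector field and is infinitesimally rigid.

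Next I want $K_SH<0$, and I would argue through the Kodaira dimension. If $\kappa(S) \ge 0$, a nonzero vector field forces $\mathrm{Aut}^0(S)$ to be a nontrivial connected algebraic group. By the classical description of vector fields on non-ruled surfaces this group is an abelian variety, so $q(S)>0$ and $S$ is (up to the minimal model) an abelian surface, a bielliptic surface, or an elliptic surface with a translation action. In each of these cases I expect to produce nonzero classes in $H^1(T_S)$, coming from $H^1(\O_S) \otimes H^0(T_S)$ via the Albanese or the elliptic fibration, or from the moduli of the fibration. This would contradict $H^1(T_S)=0$. If $S$ is ruled over a curve of genus $q \ge 1$, the relative tangent sequence of the ruling gives $H^1(T_S) \to H^1(\pi^*T_B)$, and the target has dimension $\ge h^1(T_B)>0$ when $q \ge 2$ (the case $q=1$ needs a separate check). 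Surjectivity of this map, via $H^2$ of the relative tangent sheaf, should again contradict rigidity.

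So $S$ is rational, and then I still need $K_SH<0$ rather than just $\kappa=-\infty$. Here I would use the full family of aCM vanishings. We have $\chi(T_S(-t))=h^2(T_S(-t))=h^0(\Omega_S(K_S+tH))$ for all $t \ge 1$. Combined with the identity
$$\chi(T_S)-\chi(T_S(-1))=-2K_SH-H^2=h^0(T_S)+h^0(\Omega_S(K_S))-h^0(\Omega_S(K_S+H)),$$
where $h^0(\Omega_S(K_S))=0$ for a rational surface, this should show that $K_SH \ge 0$ is incompatible with $h^0(T_S)>0$. I would compare with the behaviour of $h^0(\Omega_S(K_S+H))$ on a rational surface with $K_S+H$ nef, using the adjunction-theoretic classification. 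This is the step I regard as the main obstacle, and if it resists I would instead extract $K_SH<0$ directly from the hypotheses of \cite[Thm.~1.3]{bc}.

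Once $S$ is known to be a Del Pezzo surface of degree $s \ge 5$, I exclude $s=5$. By Bott vanishing (Remark \ref{bott}) $H^i(T_S)=H^{2-i}(\Omega_S(K_S))^*=0$ for $i \ge 1$. Riemann–Roch then gives $h^0(T_S)=\chi(T_S)=2s-10$, which is $0$ when $s=5$. This contradicts the initializedness requirement $H^0(T_S) \ne 0$, hence $s \ge 6$.
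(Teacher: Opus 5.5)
Your outline has a genuine gap at exactly the step you flag as the main obstacle. For rational $S$ you never obtain $K_SH<0$. The Riemann--Roch identity you write only computes $h^0(\Omega_S(K_S+H))=h^0(T_S)+2K_SH+H^2$, which gives no contradiction when $K_SH\ge 0$. Showing that a rigid rational surface is a Del Pezzo surface of degree $\ge 5$ is a geometric statement, not a numerical consequence of the aCM vanishings. Your treatment of $\kappa(S)\ge 0$ and of irrational ruled surfaces is likewise only a sketch (``I expect'', ``needs a separate check''). Your fallback, extracting $K_SH<0$ ``from the hypotheses'' of \cite[Thm.~1.3]{bc}, is backwards: negativity of $K_S$ is a conclusion of that theorem, not a hypothesis.

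The missing idea is that \cite[Thm.~1.3]{bc}, the classification of infinitesimally rigid surfaces, needs only $H^1(T_S)=0$. Its conclusion is that a smooth projective surface with $H^1(T_S)=0$ is either a Del Pezzo surface of degree $\ge 5$ or a minimal surface of general type. In Proposition \ref{wU} the inequality $K_SH<0$ served only to discard the general-type alternative. Here you discard it with the other half of initializedness, $H^0(T_S)\ne 0$: a surface of general type has finite automorphism group, hence no nonzero vector fields. So your Kodaira-dimension analysis, including the unresolved rational case, is an incomplete re-derivation of \cite[Thm.~1.3]{bc}. Beyond $H^1(T_S)=0$, no further aCM vanishing is needed.

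Your final step, excluding degree $5$ via $h^0(T_S)=\chi(T_S)=2s-10=0$, is correct and matches the paper. One repair is needed there: the vanishing $H^i(T_S)=0$ for $i\ge 1$ should come from writing $T_S\cong\Omega_S(-K_S)$ and applying Bott vanishing with the ample $-K_S$. Alternatively, use the aCM hypothesis for $i=1$ and $H^2(T_S)\cong H^0(\Omega_S(K_S))^*=0$. Applying Bott vanishing to $\Omega_S(K_S)$ does not work, since $K_S$ is anti-ample.
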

\begin{proof}
Since $T_S$ is aCM, we have that $H^1(T_S)=0$, hence it follows from \cite[Thm.~1.3]{bc} that $S$ is either a Del Pezzo surface of degree at least $5$ or is a minimal surface of general type. Also, $H^0(T_S)=0$ both on a Del Pezzo surface of degree $5$ and, as is well known, on a minimal surface of general type. Hence the latter two cases are excluded by the initialized condition.
\end{proof}

Now, we consider $\P^2$ and $\P^1 \times \P^1$.

\begin{prop}
\label{min2}
Let $a, b \in \Z, a \ge 1, b \ge 1$. 
\begin{itemize}
\item[(i)] If $(S,\O_S(1)) \cong (\P^2,\O_{\P^2}(a))$, then $T_S$ is aCM if and only if $a \ge 2, a \ne 3$.
\item[(ii)] If $(S,\O_S(1)) \cong (\P^1\times \P^1,\O_{\P^1\times\P^1}(a,b))$, then $T_S$ is aCM if and only if $a \ge 3, b \ge 3$. 
\end{itemize} 
Moreover, in both cases, $T_S$ is initialized.
\end{prop}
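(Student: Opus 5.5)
We compute directly: the aCM condition on a surface is the single family of vanishings $H^1(T_S(j))=0$ for all $j \in \Z$, and initializedness means $H^0(T_S(-1))=0 \ne H^0(T_S)$. Since both surfaces are toric, and $T_S$ splits or is given by a twisted Euler sequence, everything reduces to Bott's formula on $\P^2$ and K\"unneth on $\P^1\times\P^1$.

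For (i), write $T_{\P^2}(ja)$ and note that $H^1(T_{\P^2}(m))\cong H^1(\Omega_{\P^2}(-m-3))^*$ by Serre duality (using $T_{\P^2}\cong \Omega_{\P^2}(3)$). By Bott's formula, $H^1(\Omega_{\P^2}(k))\neq 0$ exactly when $k=0$. Hence $H^1(T_{\P^2}(m))\ne 0$ iff $m=-3$. Therefore $T_S$ is aCM for $\O_{\P^2}(a)$ iff $-3 \notin a\Z$, that is, iff $a\ne 1,3$. Since $a \ge 1$, this is the condition $a\ge 2, a\ne 3$.

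For (ii), use $T_{\P^1\times\P^1}\cong \O(2,0)\oplus\O(0,2)$ and K\"unneth. We get
$$H^1(\O(2+ja,jb))=H^0(\O_{\P^1}(2+ja))\otimes H^1(\O_{\P^1}(jb))\oplus H^1(\O_{\P^1}(2+ja))\otimes H^0(\O_{\P^1}(jb)).$$
We analyze the two terms by the sign of $j$:
\begin{itemize}
\item[] For $j\ge 0$, both terms vanish.
\item[] For $j\le -1$, the second term vanishes, since $H^0(\O(jb))=0$.
\item[] For $j\le -1$, the first term is nonzero iff $2+ja\ge 0$ and $jb\le -2$.
\end{itemize}
So nonvanishing occurs for some $j<0$ iff there is $j\ge1$ with $ja\le 2$ and $jb\ge 2$. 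The best choice is $j=1$, which gives $a\le 2$, or $j=2$ with $a=1$. The condition is thus $a\le 2$ (with $b\ge 2$, or $b=1$ and $j=2$, $a=1$). Combining this with the symmetric summand $\O(0,2)$, the bundle $T_S$ is aCM iff $a\ge3$ and $b\ge 3$. This case check on small $j$ is the only fiddly point.

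Initializedness in both cases follows in the same way. $H^0(T_S)\ne 0$ since $h^0(T_S)=\dim \mathrm{Aut}$, which is $8$ and $6$ respectively. $H^0(T_S(-1))=0$ follows from Mori--Sumihiro--Wahl \cite{ms,w}, since $(S,\O_S(1))\not\cong(\P^2,\O_{\P^2}(1))$. It can also be checked directly: $H^0(\Omega_{\P^2}(3-a))=0$ for $a\ge2$, and $H^0(\O(2-a,-b))=0$.
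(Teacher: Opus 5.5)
Your proof is correct and follows essentially the same route as the paper. For $\P^2$ both use Bott's formula: $H^1$ of a twist of $T_{\P^2}$ is nonzero only at twist $-3$, which forces $a\notin\{1,3\}$. For $\P^1\times\P^1$ both use K\"unneth on the splitting $\O(2,0)\oplus\O(0,2)$. There the paper assumes $a\le b$ and tests only the twists $j=-1,-2$, where you do a sign analysis for general $j$; note that $(a,b)=(2,1)$ is detected only by the summand $\O(0,2)$ at $j=-1$, which your ``combining with the symmetric summand'' covers. Initializedness is likewise checked directly in the paper, and in case (i) it holds only for $a\ge 2$, as your argument (and the paper's) implicitly assumes.
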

\begin{proof}
If $(S,\O_S(1)) \cong (\P^2,\O_{\P^2}(a))$, $T_S$ is not aCM if and only if there exists $j \in \Z$ such that 
$$0 \ne h^1(T_S(j))=h^1(T_{\P^2}(ja)=h^1(\Omega_{\P^2}(-3-ja))$$
which, by Bott's formula (see for example \cite[page 8]{oss}) is equivalent to
$$\exists j \in \Z \ \hbox{such that} \ 3+ja=0$$
hence if and only if $a \in \{1,3\}$. This proves (i).

If $(S,\O_S(1)) \cong (\P^1\times \P^1,\O_{\P^1\times\P^1}(a,b))$, suppose, without loss of generality, that $1 \le a \le b$. We have 
\begin{equation}
\label{tang}
T_S \cong (\O_{\P^1}(2) \boxtimes \O_{\P^1}) \oplus (\O_{\P^1} \boxtimes \O_{\P^1}(2)).
\end{equation}
Assume that $T_S$ is aCM. If $a \le 2$, the K\"unneth formula gives
$$0=h^1(T_S(-1))=h^0(\O_{\P^1}(2-a))h^0 (\O_{\P^1}(b-2))+h^0(\O_{\P^1}(a-2))h^0(\O_{\P^1}(2-b))$$
hence $a=b=1$ and then again, the K\"unneth formula gives
$$0=h^1(T_S(-2))=h^0(\O_{\P^1})h^1(\O_{\P^1}(-2))+h^1(\O_{\P^1}(-2))h^0(\O_{\P^1})=2$$
a contradiction. Therefore, $a \ge 3$. 

Vice versa if $a \ge 3$, the K\"unneth formula gives, for every $j \in \Z$,
$$\begin{aligned}
h^1(T_S(j)) = & h^0(\O_{\P^1}(ja+2))h^1(\O_{\P^1}(jb))+h^1(\O_{\P^1}(ja+2))h^0(\O_{\P^1}(jb)) \\
& + h^0(\O_{\P^1}(ja))h^1(\O_{\P^1}(jb+2))+h^1(\O_{\P^1}(ja))h^0(\O_{\P^1}(jb+2)).
\end{aligned}$$
In the above formula, either $j \le -1$ and then $h^0(\O_{\P^1}(ja+2))=h^0(\O_{\P^1}(jb+2))=0$, or $j \ge 0$ and then $h^1(\O_{\P^1}(jb))=h^1(\O_{\P^1}(ja))=0$. Therefore, $T_S$ is aCM.

Finally, it is well known that $H^0(T_S) \ne 0$ when $S$ is $\P^2$ or $\P^1\times \P^1$. Also, in case (i), $H^0(T_S(-1))=H^0(T_{\P^2}(-a))=0$, from the Euler sequence and $a \ge 2$. In case (ii), $H^0(T_S(-1))=H^0(T_{\P^1 \times \P^2}(-a,-b))=0$ by \eqref{tang}. Hence $T_S$ is initialized.
\end{proof}
Note that, with the exception of the case $(\P^2,\O_{\P^2}(2))$, in case $S$ is $\P^2$ or $\P^1 \times \P^1$, $T_S$ is aCM if and only if $K_S+H$ is ample. As we will see, this is confirmed also when $S$ is a Del Pezzo surface of degree at least $5$.

We first need the following

\begin{lemma} 
\label{dpk}
Let $S=S_r, 1 \le r \le 4$ and let $H$ be as in \eqref{h}, \eqref{bi} and \eqref{ma}. Then $T_S$ is not aCM with respect to $H$ if one of the following holds:
\begin{itemize}
\item[(i)] $b_r=1$, or
\item[(ii)] $r=1$ and $b_1+ 1 \le a \le b_1+ 2$, or  
\item[(iii)] $2 \le r \le 4$ and $a=b_1+b_2+1$. 
\end{itemize}
\end{lemma}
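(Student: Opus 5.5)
The plan is to show in each case that $h^1(T_S(-1))\neq 0$. By Serre duality $h^1(T_S(-1))=h^1(\Omega_S(K_S+H))$, so it suffices to find a smooth rational curve $C\subset S$ on which $K_S+H$ has non-positive degree and to show that this forces $H^1(\Omega_S(K_S+H))\ne 0$. The curves are chosen so that $K_S+H$ fails to be ample along them, in line with the remark after Proposition \ref{min2}. With $K_S+H=(a-3)\widetilde L-\sum_i(b_i-1)E_i$, the choices are as follows.
\begin{itemize}
\item[(i)] $C=E_r$, with $(K_S+H)E_r=b_r-1=0$.
\item[(ii)] $C$ a general member of $|\widetilde L-E_1|$, with $(K_S+H)C=a-b_1-2\le 0$.
\item[(iii)] $C$ the strict transform of the line $P_1P_2$, of class $\widetilde L-E_1-E_2$, with $(K_S+H)C=a-b_1-b_2-1=0$.
\end{itemize}
In every case $d:=(K_S+H)C\le 0$ and $C\cong\P^1$.

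Next I restrict $\F:=\Omega_S(K_S+H)$ to $C$. The conormal sequence gives a surjection $\Omega_S|_C\to\Omega_C\cong\O_{\P^1}(-2)$, hence a surjection $\F|_C\to\O_{\P^1}(d-2)$. Since $h^1(\O_{\P^1}(d-2))=1-d\ge 1$ and $H^2$ vanishes on a curve, $H^1(\F|_C)$ surjects onto $H^1(\O_{\P^1}(d-2))\ne 0$. The sequence $0\to\F(-C)\to\F\to\F|_C\to 0$ gives $H^1(\F)\to H^1(\F|_C)\to H^2(\F(-C))$. So $H^1(\F)\ne 0$ follows once I show $H^2(\F(-C))=0$. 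By Serre duality, $h^2(\Omega_S(K_S+H-C))=h^0(T_S(-(H-C)))$, so the task reduces to $H^0(T_S(-D))=0$ with $D=H-C$.

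To kill $H^0(T_S(-D))$ I restrict to a covering family of rational curves. Let $F$ be the strict transform of a general line of $\P^2$, so $F\sim\widetilde L$ and $F$ avoids the exceptional curves. Then $T_S|_F\cong T_{\P^2}|_L\cong\O_{\P^1}(2)\oplus\O_{\P^1}(1)$. A nonzero section of $T_S(-D)$ would restrict nontrivially to the general such $F$, since these curves cover $S$. This is impossible as soon as $D\cdot\widetilde L\ge 3$. Here $D\cdot\widetilde L=a$ for $C=E_r$ and $D\cdot\widetilde L=a-1$ in cases (ii) and (iii). Using \eqref{ma}, this gives $D\cdot\widetilde L\ge 3$ in case (iii), since $a\ge b_1+b_2+1\ge 3$ gives $a-1\ge 2$ and $a\ge 4$ gives $a-1\ge 3$. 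It also holds in most instances of (i) and (ii).

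The main obstacle is the finitely many small cases where this degree bound fails. These are $(S_1,2\widetilde L-E_1)$, $(S_1,3\widetilde L-2E_1)$, $(S_1,3\widetilde L-E_1)$, and the pairs with $a=3$, $r\ge 2$, which are essentially $H=-K_S$ or close to it. For these I would either use a different covering family, such as the conic fibration $|\widetilde L-E_1|$ on which $T_S$ restricts to $\O(2)\oplus\O$, or compute $h^1(T_S(-1))$ directly. For the direct computation, $h^0(T_S(-1))=0$ by Mori--Sumihiro--Wahl and $h^2(T_S(-1))=h^0(\Omega_S(K_S+H))$ is small and explicitly computable via \eqref{omega}. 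Riemann--Roch then gives $h^1(T_S(-1))=-\chi(T_S(-1))+h^2(T_S(-1))$, and one checks that this is positive. For example, $(S_1,2\widetilde L-E_1)$ has $K_S+H$ not even effective, and $\chi$ alone already gives a negative value.
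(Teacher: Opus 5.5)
Your proof is correct. It shares the paper's skeleton: restrict $\Omega_S(K_S+H)$ to the same smooth rational curves $E_r$, $\widetilde L-E_1-E_2$, $\widetilde L-E_1$, and kill $H^2(\Omega_S(K_S+H-C))$. Both technical steps, however, are done differently.

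\emph{The restriction step.} You only use the surjection ${\Omega_S}_{|C}\to\Omega_C$, so you need just $(K_S+H)C\le 0$. This lets the subcase $a=b_1+1$ of (ii) go through exactly like the others, with $d=-1$. The paper insists on $(K_S+H)F=0$ and computes $h^1({\Omega_S}_{|F})=1$ from the full conormal sequence. It therefore needs a separate two-step argument for $a=b_1+1$, passing through $\Omega_S((b_1-1)F)$ and $F'=\widetilde L$.

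\emph{The $H^2$-vanishing.} The paper uses Lemma \ref{dp1}: push forward to $\P^2$, use $\varepsilon_*\O_S(-E)\cong\I_{W/\P^2}$, and apply Bott's formula. This is valid as soon as $D\cdot\widetilde L\ge 2$, i.e.\ the $\widetilde L$-coefficient of $K_S+H-C$ is $\ge -1$, and that holds for every curve the paper applies it to. Your dual vanishing $H^0(T_S(-D))=0$ via general lines is more elementary, but it needs $D\cdot\widetilde L\ge 3$. That one-unit gap is exactly why boundary cases appear.

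\emph{The boundary cases.} $H=-K_S$ on $S_r$ with $r\ge 2$, and $(S_1,3\widetilde L-E_1)$, are already covered by your argument for (i), since there $D\cdot\widetilde L=a=3$. So only $(S_1,2\widetilde L-E_1)$ and $(S_1,3\widetilde L-2E_1)$ genuinely remain.
\begin{itemize}
\item Your conic-fibration suggestion fails for them. There $D\cdot(\widetilde L-E_1)\le 1$, so $T_S(-D)$ restricted to a fibre is $\O_{\P^1}(2-k)\oplus\O_{\P^1}(-k)$ with $k\le 1$, which still has sections.
\item Riemann--Roch works. One has $\chi(T_S(-1))=H^2+2HK_S+2K_S^2-10$, which equals $-1$ and $-3$ respectively. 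Then $h^1(T_S(-1))\ge-\chi(T_S(-1))$ with no further input; you computed only the first of these.
\item In fact Riemann--Roch alone settles all of (ii): $\chi(T_S(-1))$ equals $-2$ for $a=b_1+2$ and $1-2b_1$ for $a=b_1+1$.
\item Alternatively, replace lines by general members of $|2\widetilde L|$, on which $T_S$ restricts to $\O_{\P^1}(3)^{\oplus 2}$. This gives $H^0(T_S(-D))=0$ whenever $D\cdot\widetilde L\ge 2$, exactly the range of Lemma \ref{dp1}, and removes the exceptional cases altogether.
\end{itemize}
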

\begin{proof}
Given a smooth rational curve $F \subset S$, we will use the exact sequences
\begin{equation}
\label{norm}
0 \to \O_{F}(-F) \to {\Omega_S}_{|F} \to \Omega_F \to 0
\end{equation}
and
\begin{equation}
\label{res}
0 \to \Omega_S(K_S+H-F) \to \Omega_S(K_S+H) \to {\Omega_S}_{|F}(K_S+H) \to 0.
\end{equation}
In several cases we will prove that
\begin{equation}
\label{una}
H^2(\Omega_S(K_S+H-F))=0, (K_S+H)F=0 \ \hbox{and} \ h^1({\Omega_S}_{|F})=1.
\end{equation}
Then \eqref{res} and \eqref{una} will imply that
$$h^1(T_S(-1))=h^1(\Omega_S(K_S+H)) \ge h^1({\Omega_S}_{|F}(K_S+H))=h^1({\Omega_S}_{|F})=1$$
thus $T_S$ is not aCM with respect to $H$. 

We first deal with cases (i) and (iii). Set $F=E_r$ in case (i) and $F \sim \widetilde L-E_1-E_2$ be the strict transform of the line joining $P_1$ and $P_2$ in case (iii). In both cases we have $F^2=K_SF=-1$ and $HF=1$. Now \eqref{norm} is
$$0 \to \O_{\P^1}(1) \to {\Omega_S}_{|F} \to \O_{\P^1}(-2) \to 0$$
from which we get $h^1({\Omega_S}_{|F})=1$. Note that $K_S+H-F$ is either $(a-3)\widetilde L-\sum_{i=1}^{r-1} (b_i-1) E_i-E_r$ in case (i), or $(a-4)\widetilde L-(b_1-2)E_1-(b_2-2)E_2-\sum_{i=3}^r (b_i-1) E_i$ in case (iii). Hence
$$H^2(\Omega_S(K_S+H-F))=0$$ 
by Lemma \ref{dp1} and \eqref{ma}. Thus, \eqref{una} applies and this proves that $T_S$ is not aCM under hypotheses (i) and (iii).  As for (ii), we can assume, by (i) that $b_1 \ge 2$. Let $F \sim \widetilde L-E_1$ be the strict transform of the line through $P_1$, so that $F^2=0$, $K_SF=-2$. If $a=b_1+ 2$ we have $HF=2$ and \eqref{norm} is
$$0 \to \O_{\P^1} \to {\Omega_S}_{|F} \to \O_{\P^1}(-2) \to 0$$
from which we get $h^1({\Omega_S}_{|F})=1$. Now $K_S+H-F=(b_1-2)\widetilde L-(b_1-2) E_1$, hence
$$H^2(\Omega_S(K_S+H-F))=0$$ 
by Lemma \ref{dp1}. Thus, \eqref{una} applies and this proves that $T_S$ is not aCM under hypothesis (ii) when $a=b_1+ 2$. 

Finally assume that $a=b_1+ 1$ and set $F'= \widetilde L$. The exact sequence \eqref{norm} twisted by $(b_1-1)F$ is
$$0 \to \O_{\P^1} \to {\Omega_S}_{|F}((b_1-1)F) \to \O_{\P^1}(-2) \to 0$$
showing that $h^1({\Omega_S}_{|F}((b_1-1)F))=1$. Next, observe that, in the exact sequence
$$0 \to \Omega_S((b_1-2)F) \to \Omega_S((b_1-1)F) \to {\Omega_S}_{|F}((b_1-1)F) \to 0$$
we have $H^2(\Omega_S((b_1-2)F))=0$ by Lemma \ref{dp1}, hence $h^1(\Omega_S((b_1-1)F) \ge h^1({\Omega_S}_{|F}((b_1-1)F))=1$. 

Now, the exact sequence \eqref{norm} for $F'$ and twisted by $(b_1-1)F$ is
$$0 \to \O_{\P^1}(b_1-2) \to {\Omega_S}_{|F'}((b_1-1)F) \to \O_{\P^1}(b_1-3) \to 0$$
hence $H^1({\Omega_S}_{|F'}((b_1-1)F))=0$. Then the exact sequence
$$0 \to \Omega_S((b_1-1)F-F') \to \Omega_S((b_1-1)F) \to {\Omega_S}_{|F'}((b_1-1)F) \to 0$$
implies that
$$h^1(T_S(-1))=h^1(\Omega_S(K_S+H))=h^1(\Omega_S((b_1-1)F-F') \ge h^1(\Omega_S((b_1-1)F) \ge 1.$$ 
Hence $T_S$ is not aCM with respect to $H$ also in case (ii). 
\end{proof}

\renewcommand{\proofname}{Proof of Theorem \ref{acm}}
\begin{proof}
First, assume that $(S,\O_S(1))$ is as in (i)-(iii) of the theorem.

In cases (i)-(ii), we are done by Proposition \ref{min2}. In case (iii), we have that $K_S+H$ is ample by \cite[Cor.~4.6]{dr}. Note that $S$ satisfies Bott vanishing by Remark \ref{bott}. Therefore, if $j \ge 0$ we have, since $T_S \cong \Omega_S(-K_S)$,
$$H^1(T_S(j))=H^1(\Omega_S(-K_S+jH))=0$$
because $-K_S+jH$ is ample. On the other hand, if $j \le -1$ we have, by Serre's duality
$$H^1(T_S(j))=H^1(\Omega_S(K_S-jH))^*=0$$
because $K_S-jH=K_S+H-(j+1)H$ is ample. Thus, $T_S$ is aCM with respect to $H$. As is well known, $H^0(T_S) \ne 0$ with the exception of case (iii) for $r=4$, and $H^0(T_S(-1))=0$, so that $T_S$ is initialized.

Vice versa, assume that $T_S$ is initialized and aCM. 

Then Proposition \ref{aU2} implies that either $S \in \{\P^2, \P^1 \times \P^1\}$ and we are done by Proposition \ref{min2} or
\begin{equation}
\label{dp}
(S,H)=(S_r,a\widetilde L-\sum\limits_{i=1}^r b_iE_i), 1 \le r \le 3
\end{equation}
because for $r=4$ we have that $H^0(T_S)=0$.

In the cases \eqref{dp}, we have the conditions \eqref{ma} and we can assume, without loss of generality, that $b_1 \ge \ldots \ge b_r$. Then Lemma \ref{dpk} implies that $b_r \ge 2$, $a \ge b_1+3$ when $r=1$ and $a \ge b_1+b_2+2$ when $2 \le r \le 3$. It follows that $H$ is as in case (iii) (by adding permutations of the $b_i$'s). 
\end{proof}
\renewcommand{\proofname}{Proof}

\begin{remark} 
\label{tante}
Let $X \subset \P^N$ be a smooth irrreducible $n$-dimensional variety with $H^1(T_X)=0$. Let $A$ be an ample line bundle and let $N$ be a nef line bundle on $X$. Then $T_X$ is aCM with respect to $H=mA+N, m \gg 0$.
\end{remark} 
Indeed, by Fujita's vanishing theorem \cite[Thm.~1.4.35]{laz} there $m_0 \in \Z$ such that $H=mA+N$ is very ample for $m \ge m_0$ and $H^p(T_X(lA+D))=0, H^p(\Omega_X(K_X+lA+D)=0$ for $p > 0, l \ge m_0$ and any nef divisor $D$. Let $m \ge m_0$ and let $i \in \{1, \ldots, n-1\}$. Now, if $j \ge 1$, then $H^i(T_X(j))=H^i(T_X(jmA+jN))=0$ since $jm \ge m \ge m_0$. If $j \le -1$, then $H^i(T_X(j))=H^{n-i}(\Omega_X(K_X-jmA-jN)=0$ since $-jm \ge m \ge m_0$. Thus, $T_X$ is aCM.

\vskip .3cm 
Finally, we have

\renewcommand{\proofname}{Proof of Corollary \ref{au}}
\begin{proof}
It follows immediately from Theorems \ref{wu}, \ref{acm} and Lemma \ref{dpk}.
\end{proof}
\renewcommand{\proofname}{Proof}

\section{Acknowledgement}

The first named author is currently supported by the Post-doctoral Fellowship provided by the National Board for Higher Mathematics (NBHM), Department of Atomic Energy (DAE), Government of India. He gratefully acknowledges the same and also thanks INdAM for the support to a visit to Italy in February 2025. All the authors thank Prof. Francesco Malaspina and Vincenzo Antonelli for hosting them during February 2025 and Politecnico di Torino for their warm hospitality. The initial discussion took place during the conference titled ``Gianfranco Casnati's Legacy: A meeting in Algebraic Geometry".

\appendix
\section{A numerical lemma}

\begin{lemma}
\label{conto}
Let $(a;b_1,b_2,b_3,b_4) \in \Z^5$ be such that $b_1 \ge b_2 \ge b_3 \ge b_4 \ge 1, a \ge b_1+b_2+1$ and
\begin{equation}
\label{1}
a^2-b_1^2-b_2^2-b_3^2-b_4^2-6a+2b_1+2b_2+2b_3+2b_4 \le 0.
\end{equation}
Then $(a;b_1,b_2,b_3,b_4)$ is one of the following:
\begin{enumerate}
\item $(5;1,1,1,1)$. 
\item $(6;2,2,2,2)$.  
\item $(b_1+3;b_1,1,1,1), 1 \le b_1 \le 3$. 
\item $(b_1+3;b_1,2,1,1), 2 \le b_1 \le 3$. 
\item $(8;4,3,3,3)$. 
\item $(b_1+3;b_1,2,2,2), 3 \le b_1 \le 4$.  
\item $(b_1+3;b_1,2,2,1), 3 \le b_1 \le 4$. 
\item $(b_1+2;b_1,1,1,1), b_1 \ge 2$. 
\item $(9;4,4,4,3)$. 
\item $(7;3,3,3,2)$.  
\item $(2b_1+1;b_1,b_1,b_1,1), 2 \le b_1 \le 3$. 
\item $(2b_1+1;b_1,b_1,b_1,b_1), b_1 \ge 1$. 
\end{enumerate}
\end{lemma}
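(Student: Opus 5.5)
We need to sketch a proof of this numerical lemma. The plan is to rewrite the inequality \eqref{1} in completed-square form and split according to the excess of $a$ over its minimal value.

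Set $x=a-3$ and $y_i=b_i-1$. Then \eqref{1} becomes
$$x^2-9 \le \sum_{i=1}^4 (y_i^2-1), \quad\hbox{that is}\quad x^2+5 \le \sum_{i=1}^4 y_i^2,$$
with $y_1 \ge y_2 \ge y_3 \ge y_4 \ge 0$ and $x \ge y_1+y_2+1$.

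Write $a=b_1+b_2+1+k$ with $k \ge 0$, so $x=y_1+y_2+1+k$. The inequality then reads
$$(y_1+y_2+1+k)^2+5 \le y_1^2+y_2^2+y_3^2+y_4^2,$$
which simplifies to
$$2y_1y_2+2(1+k)(y_1+y_2)+(1+k)^2+5 \le y_3^2+y_4^2.$$
Since $y_3,y_4 \le y_2$, the right side is at most $2y_2^2 \le 2y_1y_2$. Hence we need
$$2(1+k)(y_1+y_2)+(1+k)^2+5 \le y_3^2+y_4^2-2y_1y_2 \le 0.$$
This is impossible, since the left side is positive. That contradicts the list, which contains solutions, so I must have the sign wrong. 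I recompute \eqref{1} directly: $a^2-6a=(a-3)^2-9$ and $-b_i^2+2b_i=-(b_i-1)^2+1$. So \eqref{1} is $x^2-9-\sum y_i^2+4 \le 0$, i.e.
$$x^2 \le 5+\sum_{i=1}^4 y_i^2.$$
The $+5$ is on the other side. The correct reduced inequality is therefore
$$2y_1y_2+2(1+k)(y_1+y_2)+(1+k)^2 \le 5+y_3^2+y_4^2-2\cdot 0,$$
where I bring $y_1^2+y_2^2$ to the left. Using $y_3^2+y_4^2 \le 2y_2^2 \le 2y_1y_2$, the $y_1y_2$ terms cancel in the worst case, and we get the key bound
$$2(1+k)(y_1+y_2)+(1+k)^2 \le 5+(y_3^2+y_4^2-2y_1y_2) \le 5.$$

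Now I split into cases on $k$. If $k\ge1$, the left side is at least $4(y_1+y_2)+4$, so $y_1+y_2=0$. Then all $b_i=1$, and \eqref{1} gives $(a-3)^2\le 5$, so $a\le5$. Together with $a \ge 4$, this gives $(5;1,1,1,1)$, and for $a=4$, $(4;1,1,1,1)$, which is item (3) with $b_1=1$. (For $k\ge 2$ the left side is already at least $9>5$, so nothing more arises.) If $k=0$, we need
$$2(y_1+y_2)+1+2y_1y_2 \le 5+y_3^2+y_4^2.$$
I subdivide by the value of $y_2$. When $y_2=0$ (so $y_3=y_4=0$), this reads $2y_1\le4$, so $y_1\le2$, giving $b_1\le3$ and item (3)-type cases at $a=b_1+2$. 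I expect to reconcile these against the list; note that $(b_1+2;b_1,1,1,1)$ is item (8) and in fact has $k=-1$ relative to my parametrization.

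Here I notice the parametrization error: the hypothesis allows $a=b_1+b_2+1$ with $b_2=1$, which gives $a=b_1+2$, i.e. $k=0$. I will redo the bookkeeping carefully in the write-up, treating $k=a-b_1-b_2-1\ge0$ throughout.

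The main obstacle is the sub-case $k=0$ with $y_2\ge1$. There the inequality becomes
$$2y_1y_2+2y_1+2y_2-4 \le y_3^2+y_4^2.$$
Using $y_3^2+y_4^2\le 2y_2^2$, it is governed by $(y_1-y_2)(2y_2)+2y_1+2y_2-4\le y_3^2+y_4^2-2y_2^2\le0$, which forces $y_1=y_2$ or a small $y_2$. I handle it by cases on $y_1-y_2\in\{0,1,\ldots\}$ and the deficits $y_2-y_3$ and $y_2-y_4$. With all terms nonnegative, this yields finitely many bounded families plus the infinite family $y_1=y_2=y_3=y_4$, which is item (12). Each leftover finite case, such as $(7;3,3,3,2)$, $(8;4,3,3,3)$, $(9;4,4,4,3)$, and items (11), (6), (7), is then checked by direct substitution. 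The $k=1$ analysis, redone correctly, produces the $a=b_1+b_2+2$ items (3), (4), (6), (7) and (2).
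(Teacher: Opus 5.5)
There is a genuine gap: an off-by-one slip in your substitution means the case $a=b_1+b_2+1$, which contains items (4)--(12), is never analyzed. Your overall strategy (complete the squares, split on $k=a-b_1-b_2-1$, use $y_3^2+y_4^2\le 2y_2^2\le 2y_1y_2$ to kill large $k$) is essentially the paper's.

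\textbf{The slip.} From $a\ge b_1+b_2+1$ you only get $x=a-3\ge b_1+b_2-2=y_1+y_2$, not $x\ge y_1+y_2+1$. With $k=a-b_1-b_2-1\ge 0$ the correct reduced inequality is
\[
2y_1y_2+2k(y_1+y_2)+k^2\le 5+y_3^2+y_4^2 .
\]
Your case ``$k\ge 1$'' is really $k\ge 2$, giving item (1). Your case ``$k=0$'' is really $k=1$, i.e.\ $a=b_1+b_2+2$. There the bound $y_3^2+y_4^2\le 2y_1y_2$ gives $y_1+y_2\le 2$, so only items (2) and (3) survive. These are the paper's Cases 1 and 2. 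You noticed that item (8) does not fit your parametrization, but you only promised to redo the bookkeeping and never did.

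\textbf{The missing case.} The case $k=0$, i.e.\ $a=b_1+b_2+1$, is governed by
\[
2y_1y_2\le 5+y_3^2+y_4^2 .
\]
This is exactly where items (4)--(12) live, including both infinite families (8) and (12). Your ``main obstacle'' paragraph still works with the $k=1$ inequality $2y_1y_2+2y_1+2y_2-4\le y_3^2+y_4^2$. That inequality cannot produce these items:
\begin{itemize}
\item $(7;3,3,3,2)$ gives left side $12$ and right side $5$.
\item The diagonal family $y_1=\dots=y_4=y$ satisfies it only for $y\le 1$, so it cannot produce item (12).
\end{itemize}
Your attribution of (4), (6), (7) to $a=b_1+b_2+2$ is also wrong: all three have $a=b_1+b_2+1$. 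Finally, your predicted outcome (``finitely many bounded families plus $y_1=\dots=y_4$'') misses family (8), where $y_2=y_3=y_4=0$ and $y_1$ is arbitrary. So a case split on the value of $y_1-y_2$ cannot terminate.

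\textbf{What is needed.} Split, as the paper does, on the pattern of equalities among the $y_i$:
\begin{itemize}
\item If $y_2>y_3$, then $y_1y_2\ge (y_3+1)^2$ and $y_1y_2\ge(y_4+1)^2$. This gives $y_3+y_4\le 1$ and quickly leads to item (4).
\item If $y_2=y_3<y_1$, then $y_1\ge y_3+1$ gives $y_3^2+2y_3\le 5+y_4^2$, hence $y_3\le 2$. This yields items (5), (6), (7), and item (8) is the subcase $y_3=0$, where $y_1$ is unconstrained.
\item If $y_1=y_2=y_3$, the inequality becomes $y_3^2-y_4^2\le 5$. This gives items (9), (10), (11) when $y_4<y_3$, and item (12) when $y_4=y_3$.
\end{itemize}
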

\begin{proof}
\noindent\underline{Case 1: $a \ge b_1+b_2+3$.} 

Then \eqref{1} gives
$$0 \ge 2b_1b_2+2(b_1+b_2+b_3+b_4)-b_3^2-b_4^2-9 \ge 2(b_1+b_2+b_3+b_4)-9$$
that implies $b_1=1$, hence $b_2=b_3=b_4=1$ and $a \ge 5$. But \eqref{1} is $a^2-6a+4 \le 0$, therefore $a=5$ and we are in case (1).

\noindent\underline{Case 2: $a=b_1+b_2+2$.} 

Then \eqref{1} gives
\begin{equation}
\label{2}
0 \ge 2b_1b_2+2b_3+2b_4-b_3^2-b_4^2-8 \ge 2(b_3+b_4)-8
\end{equation}
hence $b_3+b_4 \le 4$ and therefore $b_4 \le 2$. If $b_4=2$ it must be also $b_3=2$ and \eqref{2} becomes $b_1b_2-4 \le 0$, hence also $b_1=b_2=2$ and we are in case (2). Next, assume that $b_4=1$, so that $b_3 \le 3$ and \eqref{2} becomes 
$$0 \ge 2b_1b_2-b_3^2+2b_3-7 \ge b_1b_2+2b_3-7.$$
The above is not possible for $b_2 \ge 2$, hence $b_2=1$, so that $b_3=b_4=1$. Now \eqref{2} becomes $b_1 \le 3$ and we are in case (3).

Therefore, it remains to study the following case.

\noindent\underline{Case 3: $a=b_1+b_2+1$.} 

Then \eqref{1} gives
$$0 \ge 2b_1b_2-2b_1-2b_2-b_3^2-b_4^2+2b_3+2b_4-5$$
that can be rewritten as
\begin{equation}
\label{3}
0 \ge 2(b_1-1)(b_2-1)-(b_3-1)^2-(b_4-1)^2-5.
\end{equation}
\noindent\underline{Subcase 3.1) $b_2>b_3$.} 

Then $b_1-1 \ge b_3, b_2-1 \ge b_4$ and \eqref{3} becomes $0 \ge 2b_3+2b_4-7$, so that $b_4=1, b_3 \le 2$ and \eqref{3} becomes 
\begin{equation}
\label{4}
0 \ge 2(b_1-1)(b_2-1)-(b_3-1)^2-5 \ge (b_1-1)(b_2-1)+2(b_3-1)-4.
\end{equation}
The above is not possible for $b_3=2$, hence $b_3=1$, so that $b_4=1$. Now $b_2 \ge 2$ and, if $b_2 \ge 3$, \eqref{4} is contradicted. Therefore, $b_2=2$, \eqref{4} gives $b_1 \le 3$ and we are in case (4).

Thus, from now on, we will be in

\noindent\underline{Subcase 3.2) $b_2=b_3$.} 

Then \eqref{3} is
\begin{equation}
\label{5}
0 \ge 2(b_1-1)(b_3-1)-(b_3-1)^2-(b_4-1)^2-5.
\end{equation}

\noindent\underline{Subcase 3.2.1) $b_1>b_2$.} 

Then $b_1-1 \ge b_3 \ge b_4$ and replacing in \eqref{5} we get 
$$b_3+b_4 \le 7 \ \hbox{and} \ b_4 \le 3.$$
If $b_4=3$ then $b_3 \le 4$ and equality contradicts \eqref{5}. Therefore, $b_3=3$ and \eqref{5} gives $b_1 \le 4$. Since $b_1>b_2=b_3=3$, we deduce that $b_1=4$ and we are in case (5). If $b_4=2$ we have $2 \le b_3 \le 5$. If $b_3 \ge 3$ we contradict \eqref{5}, hence $b_3=2$ and \eqref{5} gives $b_1 \le 4$. Since $b_1>b_2=b_3=2$, we deduce that $3 \le b_1 \le 4$ and we are in case (6). If $b_4=1$ we have $1 \le b_3 \le 6$. If $b_3 \ge 3$ we contradict \eqref{5}, hence there are two possibilities. If $b_3=2$ the \eqref{5} gives $b_1 \le 4$. Since $b_1>b_2=b_3=2$, we deduce that $3 \le b_1 \le 4$ and we are in case (7). If $b_3=1$ then $b_2=b_3=b_4=1, b_1 \ge 2$ and we are in case (8). Now Subcase 3.2.1 is concluded and we are in

\noindent\underline{Subcase 3.2.2) $b_1=b_2$.} 

Therefore, $b_1=b_2=b_3$ and \eqref{5} becomes
\begin{equation}
\label{6}
0 \ge (b_3-1)^2-(b_4-1)^2-5.
\end{equation}
If $b_3 > b_4$, replacing in \eqref{6} we get $b_4 \le 3$. When $b_4=3$ we have $b_3 \ge 4$ and  \eqref{6} gives $b_3=4$. This is case (9). When $b_4=2$ we have $b_3 \le 3$ by \eqref{6}, hence $b_3=3$ and we are in case (10). When $b_4=1$ we have $b_3 \le 3$ by \eqref{6} we are in case (11). 
Finally, if $b_3=b_4$, we get case (12). 
\end{proof}

\end{document}